\theoremstyle{plain}
\newtheorem{theorem}{Theorem}[section]		
\newtheorem{lemma}[theorem]{Lemma}
\newtheorem{claim}[theorem]{Claim}
\newtheorem{proposition}[theorem]{Proposition}
\theoremstyle{remark}
\def\CC{\mathcal{C}}
\def\TT{\mathcal{D}}
\def\TTT{\mathcal{T}}
\def\RR{\mathcal{R}}
\def\BB{\mathcal{B}}
\def\DD{\mathscr{U}}
\def\N{\mathbb{N}}
\DeclareMathOperator\Deg{d}
\DeclareMathOperator\ex{ex}
\newcommand{\eps}{\ensuremath{\varepsilon}}
\let\originalleft\left
\let\originalright\right
\renewcommand{\left}{\mathopen{}\mathclose\bgroup\originalleft}
\renewcommand{\right}{\aftergroup\egroup\originalright}
\def\imod#1{\allowbreak\mkern10mu({\operator@font mod}\,\,#1)}
\begin{document}
\title{Tur\'an theorems for unavoidable patterns}

\author{Ant\'onio Gir\~ao}
\address{School of Mathematics, University of Birmingham, Edgbaston, Birmingham, B15\thinspace2TT, UK}
\email{giraoa@bham.ac.uk}

\author{Bhargav Narayanan}
\address{Department of Mathematics, Rutgers University, Piscataway, NJ 08854, USA}
\email{narayanan@math.rutgers.edu}

\date{8 June 2019}
\subjclass[2010]{Primary 05C35; Secondary 05D10}

\begin{abstract}
	We prove Tur\'an-type theorems for two related Ramsey problems raised by Bollob\'as and by Fox and Sudakov. First, for $t \ge 3$, we show that any two-colouring of the complete graph on $n$ vertices that is $\delta$-far from being monochromatic contains an \emph{unavoidable $t$-colouring} when $\delta \gg n^{-1/t}$, where an unavoidable $t$-colouring is any two-colouring of a clique of order $2t$ in which one colour forms either a clique of order $t$ or two disjoint cliques of order $t$. Next, for $ t\ge 3$, we show that any tournament on $n$ vertices that is $\delta$-far from being transitive contains an \emph{unavoidable $t$-tournament} when $\delta \gg n^{-1/\lceil t/2 \rceil}$, where an unavoidable $t$-tournament is the blow-up of a cyclic triangle obtained by replacing each vertex of the triangle by a transitive tournament of order $t$. Conditional on a well-known conjecture about bipartite Tur\'an numbers, both results are sharp up to implied constants and hence determine the order of magnitude of the corresponding off-diagonal Ramsey numbers.
\end{abstract}

\maketitle

\section{Introduction}
The starting point of Ramsey theory, namely Ramsey's theorem~\citep{fpr}, is the assertion that given any natural number $t\in \N$, every two-colouring (of the edges, here and elsewhere) of the complete graph $K_n$ on $n$ vertices contains a monochromatic copy of $K_t$ for all large enough $n\in \N$; the asymptotic behaviour of the smallest such integer, namely the Ramsey number $\RR(t)$, has been the subject of intense scrutiny (see~\citep{lowerramsey, upperramsey, Thomason1988, Conlon2009}, for example) through the past seventy or so years.

A priori, one cannot expect to find any non-monochromatic patterns in a given two-colouring of a complete graph, since the colouring in question might itself be monochromatic. In the light of this, Bollob\'as~\citep{cb} asked what non-monochromatic patterns one is guaranteed to find in any two-colouring of $K_n$ that is $\delta$-far from being monochromatic for some $\delta > 0$, where a two-colouring of $K_n$ is said to be \emph{$\delta$-far from being monochromatic} if each colour in the colouring, henceforth red and blue, appears on at least $\delta n^2$ edges. Call a two-colouring of $K_{2t}$ an \emph{unavoidable $t$-colouring} if one colour class forms either a clique of order $t$ or two disjoint cliques of order $t$. It is not hard to see that the only non-monochromatic patterns one could hope to find, even in a given two-colouring of $K_n$ that is far from being monochromatic, are precisely such unavoidable colourings (since the given colouring might itself be of this form). Confirming Bollob\'as's prediction, Cutler and Montagh showed that for any $t \in \N$ and $\delta > 0$, there exists a least integer $\CC(t,\delta) \in \N$ such that any two-colouring of $K_n$ that is $\delta$-far from being monochromatic contains an unavoidable $t$-colouring for all $n \ge \CC(t,\delta)$. Turning to quantitative estimates, Fox and Sudakov~\citep{fs} subsequently determined the order of growth of the Ramsey number $\CC(t,\delta)$, showing that $\CC(t,\delta) = (1/\delta)^{\Theta(t)}$.

Our first result pins down the off-diagonal growth rate of the Ramsey number $\CC(t,\delta)$ for each $t \ge 3$ as $\delta \to 0$. Our first result in its Tur\'an-type formulation is as follows. 
\begin{theorem}\label{colourthm}
	For each integer $t \ge 3$, there exists a $C = C(t)>0$ such that any two-colouring of the complete graph on $n \ge C$ vertices that is $Cn^{-1/t}$-far from being monochromatic contains an unavoidable $t$-colouring.
\end{theorem}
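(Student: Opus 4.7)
The plan is to combine dependent random choice with an internal application of Ramsey's theorem, locating the bound $\delta=Cn^{-1/t}$ in the bipartite Tur\'an estimate $\ex(n,K_{t,t})=O(n^{2-1/t})$.

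First, the $\delta$-far hypothesis provides red and blue graphs each with at least $Cn^{2-1/t}$ edges. A standard convexity computation then shows that for a uniformly random $t$-subset $W\subseteq V(K_n)$, the common red neighbourhood $N_R(W)=\bigcap_{w\in W}N_R(w)$ satisfies
\[
\mathbb{E}\bigl[|N_R(W)|\bigr]=\sum_{v}\binom{d_R(v)}{t}\bigm/\binom{n}{t}\ge n\bigl(2e_R/n^2\bigr)^t\ge(2C)^t,
\]
and symmetrically for the common blue neighbourhood. Choosing $C$ so that $(2C)^t\ge R(t,t)\le 4^t$, this produces a $t$-set $W$ whose common red neighbourhood contains a monochromatic $K_t$, say $T$, by Ramsey's theorem; by construction every $T$--$W$ edge is red.

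The payoff is a short case check: if $W$ is itself a blue $K_t$, then on the $2t$ vertices $W\cup T$ the blue colour class forms either $K_t$ (should $T$ be red) or $K_t+K_t$ (should $T$ be blue), and in either sub-case the induced $2$-colouring is an unavoidable $t$-colouring; the mirror statement holds when $W$ is a red $K_t$ with $T$ lying in the common blue neighbourhood of $W$. The task therefore reduces to finding a monochromatic $K_t$ $W$ with large opposite-colour common neighbourhood, which I would attack by a weighted variant of the DRC above, handled via the double count
\[
\sum_{W\text{ a blue }K_t}|N_R(W)|=\sum_{v\in V}\#\bigl\{\text{blue $K_t$'s contained in $N_R(v)$}\bigr\},
\]
lower-bounded by a K\H{o}v\'ari--S\'os--Tur\'an-style supersaturation applied to the blue graph restricted to each $N_R(v)$.

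The main obstacle I anticipate is precisely this weighted DRC: coupling ``large opposite-colour neighbourhood'' with ``monochromatic clique'' requires controlling how the monochromatic $K_t$'s distribute across opposite-colour neighbourhoods. I would separate into cases on whether each colour is clique-rich. In the clique-rich regime a direct averaging over monochromatic cliques suffices. In the degenerate regime where, say, blue has no $K_t$ at all, the situation actually simplifies: any monochromatic $K_t$ found inside $N_B(W)$ for a red $K_t$ $W$ must then itself be red, yielding a red $K_t+K_t$ spanned by a blue $K_{t,t}$, an unavoidable pattern of the second kind; and red is guaranteed to contain a $K_t$ here since $n\ge R(t,t)$ forces at least one colour to have one.
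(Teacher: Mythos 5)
Your reduction is sound: a monochromatic $K_t$, say a blue clique $W$, with $|N_R(W)|\ge R(t,t)$, together with a second application of Ramsey's theorem inside $N_R(W)$, does yield an unavoidable $t$-colouring, and your case analysis is correct. The difficulty is that the mechanism you propose for \emph{locating} such a $W$ is the heart of the proof and is left open. Your unweighted dependent random choice produces an arbitrary $t$-set with large red common neighbourhood, not a monochromatic one. The weighted version via $\sum_{W\text{ blue }K_t}|N_R(W)|=\sum_v k_t^B(N_R(v))$ requires the right-hand side to be large relative to the number of blue $K_t$'s, and your proposed lower bound by ``K\H{o}v\'ari--S\'os--Tur\'an-style supersaturation of blue inside $N_R(v)$'' has no footing: nothing forces the blue graph to be dense on $N_R(v)$, and indeed $N_R(v)$ could be a red clique, making $k_t^B(N_R(v))=0$ for that $v$. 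The clique-rich/degenerate dichotomy does not resolve this; in the ``clique-rich'' case the averaging needs precisely the missing estimate, and in the degenerate case (blue $K_t$-free) you only note that \emph{some} red $K_t$ exists, not that one has $|N_B(\cdot)|\ge R(t,t)$. (That case can in fact be closed by running dependent random choice on the \emph{blue} graph, since then every monochromatic $K_t$ in the resulting set is red and sits in a large blue common neighbourhood; but the main case remains.)

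The paper sidesteps the weighted DRC entirely via a degree cleanup (Claim~\ref{claim:fewredvertices}): pitting a large set of high-red-degree vertices against a large set of high-blue-degree vertices and applying bipartite Ramsey between them, one shows that at most $O_t(1)$ vertices have red degree $\ge\eps n$. After deleting these, every remaining vertex has blue degree $\ge(1-2\eps)n'$, so any red $K_t$ would have blue common neighbourhood of size $\ge n'/2\ge R(t,t)$ and yield a pattern at once; hence \emph{no} red $K_t$ survives. Dependent random choice on the remaining red graph then produces a set $Y$ of size $\ge R(t,t)$ in which every $t$-subset has $|N_R|\ge R(t,t)$; any monochromatic $K_t$ in $Y$ must therefore be blue, and Ramsey inside its red common neighbourhood gives a second blue $K_t$, producing the blue $K_t+K_t$ pattern. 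This cleanup, which forces every subsequent Ramsey application to return a blue clique, is exactly the ingredient missing from your sketch.
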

A well-known conjecture of Kov\'ari, S\'os and Tur\'an~\citep{kst}, see also~\citep{survey}, asserts that
\begin{equation}\label{kstconj}
	\ex(n, K_{a,b}) = \Omega(n^{2-1/a})
\end{equation} for all $b\ge a \ge 2$. Conditional on the truth of~\eqref{kstconj} in the case of $a = b = t$, our first result is easily seen to be sharp up to the multiplicative factor $C(t)$ in its statement for each $t \ge 3$. Theorem~\ref{colourthm} therefore determines, conditional on~\eqref{kstconj}, the order of magnitude of $\CC(t,\delta)$ when $t$ is much smaller than $1/\delta$; indeed, it follows from our result that, for each fixed $t \ge 3$ and in the limit of $\delta \to 0$, we have
\begin{equation}\label{colbound}
	\CC(t,\delta) = \Theta\left(\left(\frac{1}{\delta}\right)^{t}\right).
\end{equation}

Next, we consider a closely related problem for tournaments. Recall that a tournament is a directed graph obtained by orienting an undirected complete graph, and a tournament is said to be transitive if there is a linear ordering of the vertices in which all the edges point in the same direction. The analogue of Ramsey's theorem for tournaments was proved by Erd\H{o}s and Moser~\citep{em} who showed that for each $t\in\N$, every tournament on $n$ vertices contains a transitive subtournament of order $t$ for all sufficiently large $n \in \N$.

As before, one cannot hope to find any non-transitive patterns in a general tournament, since the tournament in question might itself be transitive. Hence, in the spirit of Bollob\'as's question, Fox and Sudakov~\citep{fs} asked what non-transitive tournaments one is guaranteed to find in any tournament that is $\delta$-far from being transitive for some $\delta > 0$, where a tournament of order $n$ is said to \emph{$\delta$-far from being transitive} if the orientation of at least $\delta n^2$ of its edges need to be reversed in order to make it transitive. Consider the tournament with $3t$ vertices, which we call the \emph{unavoidable $t$-tournament}, formed by taking three disjoint transitive tournaments each of order $t$ on vertex sets $V_1$, $V_2$ and $V_3$, and directing edges from $V_i$ to $V_{i+1}$ for each $i = 0, 1, 2$, with indices being taken modulo $3$. As before, it is not hard to see that the only non-transitive patterns one could hope to find, even in a given tournament that is far from being transitive, are precisely such unavoidable tournaments (since the given tournament might itself be of this form). Fox and Sudakov~\citep{fs} showed that for any $t \in \N$ and $\delta > 0$, there exists a least integer $\TT(t,\delta) \in \N$ such that any tournament on $n$ vertices that is $\delta$-far from being transitive contains an unavoidable $t$-tournament for all $n \ge \TT(t,\delta)$. Turning to quantitative estimates, Long~\citep{long} subsequently determined the order of growth of the Ramsey number $\TT(t,\delta)$, showing in particular that $\TT(t,\delta) = (1/\delta)^{\Theta(t)}$.

Our second result pins down the off-diagonal growth rate of the Ramsey number $\TT(t,\delta)$ for each $t \ge 3$ as $\delta \to 0$. Our second theorem, again in its Tur\'an-type formulation, is as follows.
\begin{theorem}\label{tourthm}
	For each integer $t \ge 3$, there exists a $C = C(t)>0$ such that any tournament on $n \ge C$ vertices that is $Cn^{-1/\lceil t/2 \rceil}$-far from being transitive contains an unavoidable $t$-tournament.
\end{theorem}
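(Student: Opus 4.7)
Set $s=\lceil t/2\rceil$. My plan is to combine a median-order analysis of $T$ with the Kov\'ari--S\'os--Tur\'an theorem and the Erd\H{o}s--Moser theorem.

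First, I would fix a median linear ordering $\pi$ of $V(T)$. Since $T$ is $\delta$-far from being transitive and $\delta\ge Cn^{-1/s}$, the number of back-edges in $\pi$ (edges oriented against $\pi$) is at least $\delta n^2\ge Cn^{2-1/s}$. Viewing these back-edges as an undirected graph, Kov\'ari--S\'os--Tur\'an (with $C$ chosen sufficiently large in terms of $t$) supplies a complete bipartite subgraph $K_{s,s}$; after a pigeonholing step on the positions of its vertices in $\pi$, I may assume that its two parts $A$ and $C$ satisfy ``$A$ precedes $C$ entirely in $\pi$'' (absorbing the constant-factor loss into the initial application of KST), so that every edge between $A$ and $C$ is directed from $C$ to $A$ in $T$.

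Next, the median-order property of $\pi$ guarantees that for each back-edge $(c,a)$ with $a\in A$, $c\in C$, a positive fraction of the vertices lying strictly between $a$ and $c$ in $\pi$ satisfy $a\to m\to c$, i.e., complete a cyclic triangle with the back-edge. A dependent-random-choice-style averaging argument, applied to this data over the $s^2$ pairs in $A\times C$, then produces a large ``middle'' set $M^{\ast}$ of vertices lying between $A$ and $C$ in $\pi$ with the uniform property that $a\to m\to c$ for every $a\in A$, $m\in M^{\ast}$, and $c\in C$; thus $A\to M^{\ast}\to C$ completely.

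Finally, I would apply the Erd\H{o}s--Moser theorem within $M^{\ast}$ to extract a transitive subtournament $V_2$ of order $t$. To obtain transitive subtournaments $V_1$ and $V_3$ of order $t$ with $V_3\to V_1$ all edges (along with the remaining domination conditions $V_1\to V_2$ and $V_2\to V_3$), I would extend $A$ and $C$ using their common out- and in-neighborhoods in $T$ (which are large by a further averaging argument leveraging the back-edge density), and apply Erd\H{o}s--Moser inside these neighborhoods. Putting $V_1, V_2, V_3$ together yields the unavoidable $t$-tournament.

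The main obstacle is the size mismatch: Kov\'ari--S\'os--Tur\'an directly supplies only a $K_{s,s}$ with $s=\lceil t/2\rceil$, whereas the parts $V_1$ and $V_3$ of the target unavoidable $t$-tournament must each have size $t$. Overcoming this requires carefully extending $A$ and $C$ using vertices in appropriate common neighborhoods, and arguing that the back-edge density of $T$ together with the structure supplied by KST makes these neighborhoods large enough to contain the required transitive subtournaments of order $t$ via Erd\H{o}s--Moser.
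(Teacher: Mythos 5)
Your plan correctly identifies the central difficulty, but the proposed resolution of the size mismatch does not work, and this is precisely where the hard part of the argument lies.

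Concretely, suppose you extract from the backward-edge graph a $K_{s,s}$ with parts $A < C$ in $\pi$ (so all $s^2$ edges are backward, from $C$ to $A$), and you then try to extend $A$ to a transitive $V_1$ of size $t$ using a common in-neighbourhood $A^+$ of $A$ and likewise extend $C$ to $V_3 = C\cup C^+$ using a common out-neighbourhood $C^+$. Since in-neighbours of an early set mostly lie even earlier and out-neighbours of a late set mostly lie even later, the natural placement has $A^+$ before $A$ and $C^+$ after $C$. But then \emph{all} $t^2$ edges of $V_3\to V_1$ (not just the $s^2$ you started with) are directed from late to early in $\pi$, i.e.\ they are backward edges, so you are implicitly asking the backward graph to contain a $K_{t,t}$. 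That needs edge density $n^{2-1/t}$, which is exactly what $Cn^{-1/\lceil t/2\rceil}$-far-from-transitive does \emph{not} guarantee; indeed, the matching lower-bound construction (Proposition~\ref{tourtight}) has no $K_{\lceil t/2\rceil,t}$ at all in its backward graph. So the ``extend $A$ and $C$'' step cannot be made to work by an averaging argument alone.

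The missing idea, which is the crux of the paper's proof, is that you must \emph{split}, not extend. One transitive class of $\DD_t$ is built as $A'\cup B'$ with $|A'|=|B'|=r=\lceil t/2\rceil$, where $A'$ sits at the far left of the ordering and $B'$ sits at the far right, with the other two classes $X$ and $Y$ in between ($A'<X<Y<B'$). Then in the required dominations, the edges $Y\to A'$ and $B'\to X$ are the only ones that must be backward, and each of those is only a $K_{r,t}$-type structure --- which is what $n^{2-1/r}$ backward edges can supply --- while $A'\to X$, $X\to Y$, $Y\to B'$, and the edges making $A'\cup B'$ transitive are all forward and cheap. Setting this up requires considerably more machinery than a single KST application: one must first reduce (via Lemma~\ref{longlemma}) to a subtournament with many \emph{long} backward edges, delete vertices of high backward degree, rule out backward $K_{t,t}$'s with transitive parts (Claim~\ref{noktt}), and run a density-increment/interval-refinement argument (Claim~\ref{densityincr}) to manufacture four well-separated intervals $I_1<I_4<J_1<J_4$ carrying the right interlacing backward density, before dependent random choice and Ramsey cleaning can produce the final configuration. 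Your outline contains none of this structure and the gap it leaves is not cosmetic; the natural way to fill it would require the stronger hypothesis $\delta\gg n^{-1/t}$, not $\delta\gg n^{-1/\lceil t/2\rceil}$.

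As a smaller point: your ``middle set'' step (finding $M^*$ with $A\to M^*\to C$ via the median-order property) implicitly needs $A$ and $C$ to be far apart in $\pi$, so that the intersection of the $s^2$ majority conditions is nonempty. The paper gets this separation from the long-backward-edge reduction; in your version you would need to argue for it explicitly, and it is not free after the pigeonholing step you invoke.
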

Our second result is also tight up to the multiplicative factor $C(t)$ in its statement for each $t \ge 3$, again conditional on the truth of~\eqref{kstconj} in the case of $a = \lceil t/2 \rceil$ and $b = t$. Theorem~\ref{tourthm} hence determines, conditional on~\eqref{kstconj}, the order of magnitude of $\TT(t,\delta)$ when $t$ is much smaller than $1/\delta$; indeed, it follows from our result that, for each fixed $t \ge 3$ and in the limit of $\delta \to 0$, we have
\begin{equation}\label{tourbound}
	\TT(t,\delta) =  \Theta\left(\left(\frac{1}{\delta}\right)^{ \lceil t/2 \rceil}\right).
\end{equation}

Let us remark that while the statement of Theorem~\ref{tourthm} and the assertion about its sharpness bear a close resemblance, respectively, to the statement of Theorem~\ref{colourthm} and the assertion about its sharpness, we need to work significantly harder to establish these facts for tournaments than we do in the case of colourings.

The rest of this paper is organised as follows. We gather some preliminary facts together in Section~\ref{prelim}. We present the short proof of Theorem~\ref{colourthm}, as well as the simple construction demonstrating its sharpness, in Section~\ref{sec:col}. We postpone the more delicate proof of Theorem~\ref{tourthm}, as well as the construction demonstrating its sharpness, to Section~\ref{sec:tour}. Our main results address unavoidable patterns of order $t \ge 3$, and the case of $t=1$ is trivial; the exceptional case of $t = 2$ however demonstrates some anomalous behaviour, which we address in Section~\ref{sec:excep}. Finally, we conclude with a discussion of open problems in Section~\ref{conc}.

\section{Preliminaries}\label{prelim}
Here, we collect together the conventions we adopt when dealing with graphs, both directed and undirected, as well as a few useful results that we shall rely on.

Let $G=(V,E)$ be an undirected graph. We write $v(G)$ and $e(G)$ respectively for the number of vertices and edges of $G$. We denote the neighbourhood of a vertex $x \in V(G)$ by $N_G(x)$, and we write $\Deg_G(x)=|N_G(x)|$ for the degree of $x$ in $G$. More generally, for a set of vertices $X \subset V(G)$, the common neighbourhood $N_G(X)$ of $X$ is defined to be the set of vertices adjacent to all the vertices in $X$. Here and elsewhere, we omit the subscripts specifying the graph when the graph in question is clear from the context. Finally, for a set $X\subset V(G)$ of vertices, we write $G[X]$ for the subgraph of $G$ induced by $X$, and given two disjoint sets $X, Y \subset V(G)$, we write $G[X,Y]$ for the induced bipartite subgraph between the vertex classes $X$ and $Y$ in $G$.

For a fixed graph $H$, the Tur\'an number $\ex(n, H)$ is the maximum number of edges in an $n$-vertex graph with no subgraph isomorphic to $H$. It is known that the Kov\'ari--S\'os--Tur\'an conjecture~\eqref{kstconj} is, if true, tight up to multiplicative constants. The following fact, which may be deduced using the technique of dependent random choice, demonstrates this and a bit more; see~\citep{drc}, for example.
\begin{proposition}\label{drc1}
	For all $K, t \in \N$, there exists a $C = C(K,t) > 0$ such that any graph $G$ on $n$ vertices with $e(G) \ge Cn^{2-1/t}$ edges contains a set $S \subset V(G)$ of $K$ vertices in which each subset $X \subset S$ of $t$ vertices satisfies $|N(X)| \ge K$. \qed
\end{proposition}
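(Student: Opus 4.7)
The plan is to apply the standard dependent random choice argument. First I would sample a random multiset $T = \{v_1, \dots, v_t\}$ of $t$ vertices from $V(G)$ chosen independently and uniformly at random (with replacement), and let $A = N(T)$ be the random set of vertices adjacent to every element of $T$. Since $v \in A$ precisely when each $v_i$ lies in $N(v)$, we have $\Pr[v \in A] = (\Deg(v)/n)^t$, so by convexity of $x \mapsto x^t$ (Jensen's inequality applied to the uniform distribution on $V(G)$),
\[
\mathbb{E}[|A|] = \sum_{v \in V(G)} \left(\frac{\Deg(v)}{n}\right)^t \ge n \left(\frac{2e(G)}{n^2}\right)^t \ge (2C)^t,
\]
using the hypothesis $e(G) \ge Cn^{2-1/t}$ in the last step.

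Next I would count the ``bad'' $t$-subsets of $V(G)$, meaning those $t$-subsets $X$ with $|N(X)| < K$. For any such $X$, the event $X \subset A$ occurs iff every sampled $v_i$ lies in $N(X)$, which has probability $(|N(X)|/n)^t < (K/n)^t$. Hence the number $Y$ of bad $t$-subsets contained in $A$ satisfies
\[
\mathbb{E}[Y] \le \binom{n}{t} \left(\frac{K}{n}\right)^t \le K^t.
\]

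Combining the two computations gives $\mathbb{E}[|A| - Y] \ge (2C)^t - K^t$, and picking $C = C(K,t)$ large enough so that this quantity is at least $K$ (any $C \ge (K+K^t)^{1/t}/2$ suffices), we may fix an outcome of the random experiment for which $|A| - Y \ge K$. Deleting one vertex from each of the at most $Y$ bad $t$-subsets contained in $A$ then leaves the required set $S$ of at least $K$ vertices, every $t$-subset of which has common neighbourhood of size at least $K$. The argument is entirely standard and I anticipate no genuine obstacle; the only care needed is in choosing the sample size equal to $t$ so that the $n$-dependence on both sides cancels, and in then taking $C$ large enough that the main term $(2C)^t$ dominates both the error $K^t$ coming from bad subsets and the required size $K$ of $S$.
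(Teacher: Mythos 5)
Your proof is correct and is precisely the standard dependent random choice argument that the paper invokes (the proposition is stated with a \qed and a citation rather than proved in-text, since it is folklore from the dependent random choice literature). Sampling $t$ vertices with replacement so that the $n$-dependence cancels, lower-bounding $\mathbb{E}|A|$ by convexity, upper-bounding the expected number of bad $t$-subsets inside $A$ by $K^t$, and then deleting one vertex per bad subset is exactly the intended argument; your choice of $C$ is adequate and the cleanup step is handled correctly.
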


It will also be convenient to record the fact that bipartite Tur\'an problems are `degenerate' in the following form.

\begin{proposition}\label{drc2}
	For all $t \in \N$ and $\eps > 0$, there exists a $C = C(t,\eps) > 0$ such that every bipartite graph $G$ between vertex classes $X$ and $Y$ with $|X|, |Y| \ge C$ and $e(G) \ge \eps |X||Y|$ contains a copy of $K_{t,t}$. \qed
\end{proposition}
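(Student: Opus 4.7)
The plan is to prove this via the classical Kővári–Sós–Turán double-counting argument. Let $G$ be a bipartite graph on vertex classes $X$ and $Y$ with $e(G) \geq \eps |X||Y|$, so that the average degree $\bar{d}$ of a vertex of $X$ into $Y$ is at least $\eps |Y|$. The main idea is to count pairs $(v, T)$ with $v \in X$ and $T \subseteq N(v) \cap Y$ a $t$-element subset: on one hand this sum equals $\sum_{v \in X} \binom{d(v)}{t}$, which by the convexity of $x \mapsto \binom{x}{t}$ and Jensen's inequality is at least $|X| \binom{\bar{d}}{t} \geq |X| \binom{\eps |Y|}{t}$.

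On the other hand, if $G$ contains no copy of $K_{t,t}$, then for each $t$-subset $T \subseteq Y$ the common neighbourhood of $T$ in $X$ must have fewer than $t$ vertices (otherwise $T$ together with $t$ such common neighbours would form a $K_{t,t}$). Summing over $T$ gives the upper bound $(t-1) \binom{|Y|}{t}$ on the same count. Comparing the two bounds yields
\[
|X| \binom{\eps |Y|}{t} \leq (t-1) \binom{|Y|}{t}.
\]

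Choosing $C = C(t, \eps)$ large enough that $\eps |Y| \geq 2t$ forces $\binom{\eps |Y|}{t} \geq (\eps |Y|)^t/(2 \cdot t!)$, and pairing this with the trivial bound $\binom{|Y|}{t} \leq |Y|^t/t!$ reduces the displayed inequality to $|X| \leq 2(t-1)/\eps^t$. Picking $C(t, \eps)$ to exceed both $2t/\eps$ and $2(t-1)/\eps^t$ then forces the existence of a $K_{t,t}$ whenever $|X|, |Y| \geq C$.

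I do not expect any genuine obstacle here: this is a textbook application of double counting, and the only mildly technical point is absorbing the lower-order corrections in the approximation $\binom{\eps|Y|}{t} \approx (\eps|Y|)^t/t!$ into the choice of $C$. One could alternatively deduce the statement directly from Proposition~\ref{drc1} applied with $K = t$, but the self-contained argument above is cleaner and gives explicit dependence of $C$ on $t$ and $\eps$.
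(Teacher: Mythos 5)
Your argument is the standard Kővári–Sós–Turán double count, which is exactly what the paper is implicitly appealing to by stating Proposition~\ref{drc2} with a \qed{} and no proof, so there is no divergence in approach. The one small slip is in the numerics: from the threshold $\eps|Y|\ge 2t$ you only get $\binom{\eps|Y|}{t}\ge (\eps|Y|-t+1)^t/t!\ge (\eps|Y|)^t/(2^t t!)$, not $(\eps|Y|)^t/(2\cdot t!)$; to get the latter you would want $\eps|Y|$ on the order of $t^2$ so that $\prod_{i<t}(1-i/(\eps|Y|))\ge 1/2$. Either correction is absorbed into the choice of $C(t,\eps)$ and the final contradiction $|X|\le 2^t(t-1)/\eps^t$ (with the corrected factor) goes through unchanged, so the proof is sound.
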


Our notation when dealing with directed graphs mirrors our notation for undirected graphs: for instance, we write $N^+(\cdot)$ for out-neighbourhoods, $\Deg^-(\cdot)$ for in-degrees, and so on. We shall require some simple properties of orderings of tournaments that minimise the number of `backward edges'. Let $\sigma = (v_1, v_2, \dots, v_n)$ be an ordering of the vertex set of an $n$-vertex digraph $D$. A forward edge in $\sigma$ is an edge of $D$ of the form $v_i v_j$ with $i < j$, and a backward edge is any edge of $D$ that is not a forward edge. Given $i < j$, the interval or segment $[i, j]$ in $\sigma$ refers to the set of vertices $\{v_i, v_{i+1}, \dots, v_j\}$.  Also, for $i < j$, we say that the distance between $v_i$ and $v_j$ is $j-i$, the length of an edge then being the distance between its endpoints. Finally, given two disjoint sets $X, Y \subset V(D)$, we say $X$ precedes $Y$ in $\sigma$, or $X < Y$ in short, if each vertex of $X$ appears in $\sigma$ before every vertex of $Y$.

The following proposition, see~\citep{long}, follows easily from `switching' arguments.
\begin{proposition}\label{minorder}
	Let $T$ be an $n$-vertex tournament and let $\sigma = (v_1, v_2, \dots, v_n)$ be an ordering of $V(T)$ that minimises the number of backward edges. Then for any $1 \le i < j \le n$, we have
	\begin{enumerate}
		\item $|N^+(v_i) \cap [i+1,j ]| \ge (j-i)/2$,
		\item $|N^-(v_j) \cap [i,j-1 ]| \ge (j-i)/2$, and
		\item $\sigma$ minimises, when restricted to the interval $[i, j]$, the number of backward edges in $T[[i,j]]$. \qed
	\end{enumerate}
\end{proposition}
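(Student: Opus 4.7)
The plan is to derive all three parts from local modifications of $\sigma$ that would strictly decrease the number of backward edges, contradicting minimality. All the content lies in careful bookkeeping of how edge statuses change under such a switch; the main, very mild obstacle is precisely this bookkeeping in the move used for part~(1), after which everything reduces to one-line arithmetic.

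For part~(1), I argue by contradiction: suppose $a = |N^+(v_i) \cap [i+1, j]| < (j-i)/2$, so that $b = |N^-(v_i) \cap [i+1, j]| = (j-i) - a > a$. Form a new ordering $\sigma'$ from $\sigma$ by pulling $v_i$ out of position $i$ and reinserting it immediately after $v_j$, so that the block $v_{i+1}, \ldots, v_j$ shifts one position to the left. The relative order of every pair of vertices not involving $v_i$ is preserved, so only edges incident to $v_i$ can change status; moreover, the relative order of $v_i$ with vertices outside $[i+1, j]$ is unchanged as well, so only the edges between $v_i$ and $[i+1, j]$ can flip. Before the move these contributed $b$ backward edges (one for each $v_k \in N^-(v_i) \cap [i+1,j]$), while after the move they contribute $a$ backward edges (one for each $v_k \in N^+(v_i) \cap [i+1,j]$). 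Hence $\sigma'$ has $a - b < 0$ fewer backward edges than $\sigma$, a contradiction. Part~(2) follows by the entirely symmetric switch moving $v_j$ from position $j$ to position $i$.

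For part~(3), suppose for contradiction that some ordering $\tau$ of $\{v_i, \ldots, v_j\}$ witnesses strictly fewer backward edges in $T[[i,j]]$ than $\sigma$ does when restricted to $[i, j]$. Form $\sigma''$ from $\sigma$ by replacing the block on positions $[i, j]$ by $\tau$ while leaving the vertices in positions $[1,i-1]$ and $[j+1, n]$ fixed. Since the set of vertices occupying positions $[i,j]$ is unchanged, every edge with one endpoint in $[i,j]$ and one endpoint outside has the same forward/backward status in $\sigma''$ as in $\sigma$, while the number of backward edges inside $T[[i,j]]$ strictly decreases. This again contradicts the minimality of $\sigma$.
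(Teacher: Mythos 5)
Your proof is correct and is precisely the standard ``switching'' argument that the paper invokes (without writing out) in attributing the proposition to~\citep{long}; the bookkeeping for the reinsertion move in part~(1), the symmetric move for part~(2), and the block-replacement for part~(3) are all handled properly.
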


We also require the following variant of a lemma due to Long~\citep{long} that says, roughly speaking, that a tournament either has many `long backward edges' or a subtournament that is `further from being transitive'.

\begin{lemma}\label{longlemma}
Let $\alpha>0$, let $T$ be an $n$-vertex tournament that is $\alpha$-far from being transitive and suppose that $\sigma$ is an ordering of $V(T)$ that minimises the number of backward edges. Then $T$ either contains
\begin{enumerate}
	\item $\alpha n^2 / 1000$ backwards edges in $\sigma$ each of length at least $n/50$, or
	\item a subtournament of order at least $n/20$ that is $6\alpha$-far from being transitive.
\end{enumerate}
\end{lemma}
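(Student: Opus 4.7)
Because $\sigma$ minimises the number of backward edges over all orderings of $V(T)$, it has at least $\alpha n^2$ backward edges: reversing them yields a transitive tournament, so the number of backward edges in $\sigma$ is a lower bound for the minimum number of edge-reversals needed, which by hypothesis is at least $\alpha n^2$. Assume now that conclusion~(1) fails, so fewer than $\alpha n^2/1000$ backward edges in $\sigma$ have length at least $n/50$; then at least $999\alpha n^2/1000$ backward edges are \emph{short}, meaning of length strictly less than $n/50$. I aim to deduce conclusion~(2) from this via a sliding-window averaging argument.

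Set $L = \lfloor n/20 \rfloor$ and consider the sliding windows $J_s = [s, s + L - 1]$ for $s = 1, \ldots, n - L + 1$. Call a short backward edge $v_i v_j$ \emph{interior} if $L \le i$ and $j \le n - L + 1$; every interior short backward edge lies in exactly $L - (j-i) \ge L - n/50 \ge 3n/100$ of the windows $J_s$, while every non-interior short backward edge has both endpoints in $[1, L]$ or both in $[n - L + 1, n]$. Before averaging, I isolate the boundary: if either of the extremal windows $J_1 = [1, L]$ or $J_{n-L+1} = [n - L + 1, n]$ is itself $6\alpha$-far from being transitive we are done, and otherwise Proposition~\ref{minorder}(3) implies that each of these two windows contains fewer than $6\alpha L^2$ backward edges of $\sigma$. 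Hence at most $12\alpha L^2 \le 3\alpha n^2/100$ short backward edges are non-interior, leaving at least $969\alpha n^2/1000$ interior short backward edges.

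Double counting pairs of the form (window, interior short backward edge contained in it) now gives
\[
\sum_{s=1}^{n-L+1}\bigl|\{\text{backward edges of }\sigma\text{ contained in }J_s\}\bigr|\;\ge\;\frac{969\alpha n^2}{1000}\cdot\frac{3n}{100}.
\]
Dividing by $n - L + 1 \le n$, some window $J_s$ contains at least $2907\alpha n^2/10^5$ backward edges of $\sigma$, which exceeds $6\alpha L^2 = 6\alpha n^2/400 = 1500\alpha n^2/10^5$. By Proposition~\ref{minorder}(3) again, $\sigma$ restricted to $J_s$ minimises the number of backward edges in the sub-tournament $T[J_s]$; since this minimum equals the number of edge-reversals required to make $T[J_s]$ transitive, $T[J_s]$ on $L \ge n/20$ vertices is $6\alpha$-far from being transitive, establishing conclusion~(2).

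The main obstacle is the boundary: for very small $\alpha$ a naive sliding-window count collapses at the ends of $\sigma$, because short backward edges lying entirely in the first or last $L$ positions could swamp the interior contribution. The trick that makes the argument go through is to first apply the lemma's own conclusion~(2) to the two extremal windows $J_1$ and $J_{n-L+1}$, and only then average over the remaining, well-behaved windows.
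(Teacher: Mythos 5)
Your approach is the same as the paper's (discard the two extremal length-$L$ windows, then average the remaining short backward edges over all sliding windows of length $L$), and your final arithmetic is correct, but there is a genuine gap in the classification of boundary edges. You assert that ``every non-interior short backward edge has both endpoints in $[1,L]$ or both in $[n-L+1,n]$.'' This is false. Your definition of interior requires $L \le i$, so a short backward edge $v_iv_j$ with $i < L < j$ (crossing position $L$, with $j-i < n/50$) is non-interior, yet it has an endpoint outside $J_1$ and is nowhere near $J_{n-L+1}$. There may be up to $2\lfloor n/50\rfloor^2 \approx n^2/1250$ such crossing edges, which is $\gg \alpha n^2$ when $\alpha$ is small (precisely the regime in which the lemma is applied). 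Consequently the step ``at most $12\alpha L^2$ short backward edges are non-interior, leaving at least $969\alpha n^2/1000$ interior ones'' does not follow, and as written your averaging could run over far too few edges.

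The fix is small and restores agreement with the paper's proof. The window-count formula you use for interior edges, namely that $v_iv_j$ lies in exactly $L-(j-i)$ windows, only requires $j > L$ and $i < n-L+1$ (so that both $\max(1,j-L+1)=j-L+1$ and $\min(i,n-L+1)=i$), not the stronger $i \ge L$ and $j \le n-L+1$. Thus the crossing edges are themselves contained in at least $L - n/50 \ge 3n/100$ windows and should simply be included in the double count rather than discarded. Redefining ``interior'' as ``not contained entirely within $J_1$ and not contained entirely within $J_{n-L+1}$,'' the bound $\ge 999\alpha n^2/1000 - 12\alpha L^2 \ge 969\alpha n^2/1000$ on interior edges becomes valid, each such edge does lie in $\ge 3n/100$ windows, and your averaging goes through exactly as you computed. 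This is essentially what the paper does: it removes only those edges with both endpoints in $A$ or both in $B$, then notes the survivors lie entirely in the central interval $[n/40,\,39n/40]$, and averages over windows.
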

\begin{proof}
	Suppose that the number of backward edges in $\sigma$ of length at least $n/50$ is at most $\alpha n^2 / 1000$, so writing $F$ for the set of backward edges in $\sigma$ of length at most $n/50$, we have $|F|\ge (999/1000) \alpha n^2$. 
	
	Let $A$ be the initial segment of the first $n/20$ vertices in $\sigma$, and let $B$ be the terminal segment of the final $n/20$ vertices in $\sigma$. If there are more than $6\alpha(n/20)^2 $ backward edges within either $A$ or $B$, then we are done by Proposition~\ref{minorder}. Let $F' \subset F$ be the set of those edges with at most one endpoint in $A$ or in $B$, and note that $|F'|\geq |F|-12\alpha(n/20)^2 \ge (28/30) \alpha n^2$. 
	
	Now, each edge in $F'$ has length at most $n/50$ and does not lie entirely within either $A$ or $B$, so each edge of $F'$ lies entirely within the interval $[n/40, 39n/40]$. Since we have excluded edges at the extremes of $\sigma$ in $F'$, it is straightforward to check that a uniformly random interval of $n/20$ vertices contains both endpoints of an edge in $F'$ with probability at least $1/40$. Hence, there is an interval $I$ of $n/20$ vertices in $\sigma$ with at least $(1/40) (28/30) \alpha n^2$ backward edges within it. From Proposition~\ref{minorder}, it follows that $T[I]$ is at least $9\alpha$-far from being transitive, as required. 
\end{proof}

We shall make use of Ramsey's theorem in its various guises; see~\citep{book}, for instance.
\begin{proposition}\label{bipRam}
	For all $t\in \N$, there exist integers $\RR(t)$, $\BB(t)$ and $\TTT(t)$ such that the following hold. Every two-colouring of $K_n$ with $n \ge \RR(t)$ contains a monochromatic copy of $K_t$. Every two-colouring of $K_{n,n}$ with $n \ge \BB(t)$ contains a monochromatic copy of $K_{t,t}$. Every tournament on $n \ge \TTT(t)$ vertices contains a transitive subtournament on $t$ vertices.\qed
\end{proposition}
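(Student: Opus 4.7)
The plan is to prove each of the three assertions by a standard pigeonhole argument from classical Ramsey theory.

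For the first assertion, I would use the Erd\H{o}s--Szekeres argument: in any two-coloured $K_n$, each vertex $v$ has at least $(n-1)/2$ edges of a single colour; restricting to this majority-coloured neighbourhood and recursing, and then adjoining $v$ to the resulting monochromatic $K_{t-1}$ when the colours match, gives the familiar recursion $\RR(s,t) \le \RR(s-1,t)+\RR(s,t-1)$ and hence the finite bound $\RR(t) \le 4^t$. The third assertion is handled identically by the Erd\H{o}s--Moser argument, replacing ``same colour'' by ``same orientation at $v$'': either $|N^+(v)|$ or $|N^-(v)|$ is at least $(n-1)/2$, and a transitive subtournament of order $t-1$ in the larger half extends using $v$ as source or as sink, giving $\TTT(t) \le 2^{t-1}$.

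The bipartite case requires a little more care, and I would argue by iterative extraction. Starting from $K_{n,n}$ with sides $X,Y$, I alternately pick $x_1 \in X$, pass to the majority-colour neighbourhood $Y_1 \subset Y$ of $x_1$ of some colour $c_1$; then pick $y_1 \in Y_1$, pass to the majority-colour neighbourhood $X_1 \subset X\setminus\{x_1\}$ of some colour $d_1$; and so on, maintaining nested sets $Y_1 \supset Y_2 \supset \cdots$ and $X_1 \supset X_2 \supset \cdots$. After $k = 8t$ alternating steps, this produces sequences $x_1,\dots,x_k$ and $y_1,\dots,y_k$ together with labels $c_i, d_i \in \{\text{R},\text{B}\}$ such that the edge $x_iy_j$ has colour $c_i$ for $i \le j$ and colour $d_j$ for $i > j$. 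A pigeonhole on the pair $(c_i,d_i) \in \{\text{R},\text{B}\}^2$ produces an index set $I \subset \{1, \dots, k\}$ of size at least $k/4$ with constant labels $(c,d)$: if $c = d$, then $\{x_i : i \in I\}$ and $\{y_i : i \in I\}$ form a monochromatic $K_{k/4,k/4}$; if $c \ne d$, splitting $I$ in half into its first $k/8$ indices $I_1$ and its last $k/8$ indices $I_2$ yields a monochromatic $K_{k/8,k/8}$ of colour $c$ between $\{x_i : i \in I_1\}$ and $\{y_j : j \in I_2\}$, since every such pair satisfies $i < j$. Either way $k = 8t$ suffices, and since the iteration only halves each side $k$ times, $n \ge 2^{k+1}$ is enough.

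The main obstacle is only the mild bookkeeping in the bipartite case; the other two parts are immediate from the classical Erd\H{o}s--Szekeres and Erd\H{o}s--Moser arguments.
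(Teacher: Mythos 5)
The paper states this proposition without proof, citing a standard textbook, since all three assertions are classical (Ramsey's theorem, the bipartite Ramsey theorem, and the Erd\H{o}s--Moser theorem); so there is no argument in the paper to compare against. Your proofs of the first and third parts are exactly the textbook Erd\H{o}s--Szekeres and Erd\H{o}s--Moser majority-neighbourhood recursions, and they are correct. Your bipartite argument is also correct and is essentially the standard alternating-extraction proof: the point you handle carefully and correctly is that $c_i$ governs edges $x_iy_j$ with $i \le j$ while $d_j$ governs those with $i > j$, so after pigeonholing on $(c_i,d_i)$ you either get a monochromatic block outright when $c = d$, or, when $c \ne d$, you restrict to pairs with $i < j$ by splitting the surviving index set into early and late halves. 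The one bookkeeping point to make explicit is that you should remove $y_{i-1}$ before forming $Y_i$ so the $y$'s are distinct; this changes the recursion from $|Y_i| \ge |Y_{i-1}|/2$ to $|Y_i| \ge (|Y_{i-1}|-1)/2$, which your bound $n \ge 2^{k+1}$ with $k = 8t$ already absorbs, so the proof stands as written.
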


Finally, a word on asymptotic notation is also in order. We shall make use of standard asymptotic notation; the variable tending to infinity will always be $n$ unless we explicitly specify otherwise. When convenient, we shall also make use of some notation (of Vinogradov) that might be considered non-standard: given functions $f(n)$ and $g(n)$, we write $f \ll g$ if $f = O(g)$ and $f \gg g$ if $g = O(f)$. Here, constants suppressed by the asymptotic notation may depend on fixed parameters such as $t$, but not on $n$ or quantities depending on $n$ such as $\delta$.

\section{Colourings}\label{sec:col}
In this section, we deal with unavoidable colourings. We start by presenting an extremal construction complementing Theorem~\ref{colourthm}.
\begin{proposition}\label{coltight}
	For each integer $t \ge 3$ and all large enough $n \in \N$, there is a two-colouring of $K_n$ not containing an unavoidable $t$-colouring that is $\delta_t$-far from being monochromatic, where $\delta_t (n) =  (\ex(n,K_{t,t}) - 1)/2n^2$.
\end{proposition}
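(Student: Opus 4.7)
The plan is to use a bipartite $K_{t,t}$-free graph as the minority colour class. I would first take any $K_{t,t}$-free graph $G$ on $n$ vertices attaining $\ex(n,K_{t,t})$ edges, then pass to a bipartite spanning subgraph $G' \subseteq G$ retaining at least half of its edges (using the elementary max-cut bound). This produces a $K_{t,t}$-free bipartite graph $G'$ with $e(G') \ge \ex(n,K_{t,t})/2 \ge (\ex(n,K_{t,t}) - 1)/2 = \delta_t n^2$ edges. Now colour the edges of $G'$ red and the remaining edges of $K_n$ blue.

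The fact that this colouring is $\delta_t$-far from being monochromatic is immediate: the red class has $e(G') \ge \delta_t n^2$ edges by construction, and since $\ex(n,K_{t,t}) = o(n^2)$, the blue class has far more than $\delta_t n^2$ edges for $n$ large.

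The key step is to verify that no set $S$ of $2t$ vertices induces an unavoidable $t$-colouring on $K_n$. I would split into the four possible patterns the induced colouring on $S$ could match. In the two patterns where red on $S$ forms $K_t$ or two vertex-disjoint $K_t$'s, we would obtain a red odd cycle (in fact a red triangle, since $t \ge 3$), contradicting the bipartiteness of $G'$. In the pattern where blue on $S$ forms two disjoint $K_t$'s on parts $V_1, V_2 \subset S$, the red edges on $S$ are exactly the complete bipartite graph $K_{V_1,V_2}$, producing a red $K_{t,t}$ and contradicting $K_{t,t}$-freeness of $G'$. In the remaining pattern, where blue on $S$ forms a single $K_t$ on some $V_1 \subset S$, the red edges on $S$ include the entire bipartite graph between $V_1$ and $V_2 := S \setminus V_1$, again yielding a red $K_{t,t}$ and the same contradiction. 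No serious obstacle is expected here; the argument is essentially the observation that a bipartite $K_{t,t}$-free graph leaves no room for any of the four forbidden patterns, and the only choice one has to make is the combination of bipartiteness (which kills red cliques) with $K_{t,t}$-freeness (which kills the other two cases).
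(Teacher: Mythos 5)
Your proof is correct and takes essentially the same approach as the paper: pass from an extremal $K_{t,t}$-free graph to a bipartite subgraph keeping at least half the edges, colour that subgraph red, and observe that bipartiteness rules out red $K_t$'s while $K_{t,t}$-freeness rules out red $K_{t,t}$'s, killing all four patterns. The only cosmetic difference is that you start from a graph with $\ex(n,K_{t,t})$ edges rather than $\ex(n,K_{t,t})-1$, which is fine and in fact yields a marginally stronger constant.
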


In particular, if the conjectural bound~\eqref{kstconj} holds, we actually have $\delta_t(n) \gg n^{-1/t}$ in the above construction for each $t \ge 3$.

\begin{proof}[Proof of Proposition~\ref{coltight}]
	Given $t\ge 3$, start with a graph $G$ on $[n]$ with $m = \ex(n,K_{t,t})-1$ edges that does not contain any copies of $K_{t,t}$, and pass to a bipartite subgraph $H$ of $G$ with at least $m/2$ edges. Now, colour the edges of the complete graph on $[n]$ by colouring all the edges of $H$ red, and all the other edges blue. The construction ensures that there is no red clique on three vertices, and that there are no red copies of $K_{t,t}$. Since $m=o(n^2)$, clearly the number of both the red edges and the blue edges is at least $m/2$ provided $n$ is sufficiently large, so the claim follows.
\end{proof}

Having demonstrated its sharpness, we now give the proof of Theorem~\ref{colourthm}.

\begin{proof}[Proof of Theorem~\ref{colourthm}]
	Let us fix $C = C(t)$ to be large enough to support all of the estimates that follow, and suppose that we have a two-coloured complete graph $G$ on $n \ge C$ vertices in which the number of both the red and the blue edges is at least $Cn^{2-1/t}$. We assume that $G$ is a counterexample to the result that does not contain an unavoidable $t$-colouring and thereby derive a contradiction.

	We denote the graphs spanned by the red and blue edges of $G$ by $R$ and $B$ respectively. We also assume, without loss of generality, that there are at least as many blue edges as there are red edges in $G$.

	The first step in the proof is to show that we may remove a very small number of vertices from $G$ so that there are no red copies of $K_t$ in the resulting graph.
	\begin{claim}\label{claim:fewredvertices}
		For every $\eps>0$, there exists $C_1=C_1(t,\eps)$ so that we may find a set $S \subset V(G)$ of size at most $C_1$ such that in $G' = G[V(G) \setminus S]$, every vertex is incident to at least $(1-2\eps)n'$ blue edges, where $n' = v(G')$.
	\end{claim}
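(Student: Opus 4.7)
My plan is to show that the set $V_{\text{high}} := \{v \in V(G) : d_R(v) > \varepsilon n\}$ of ``red-heavy'' vertices has cardinality bounded by a constant depending only on $t$ and $\varepsilon$; then taking $S = V_{\text{high}}$, every remaining vertex has red degree at most $\varepsilon n$, which is in turn at most $2\varepsilon n'$ since $|S|$ is small compared to $n$. I would argue by contradiction, assuming $|V_{\text{high}}| > C_1$ for a large constant $C_1 = C_1(t, \varepsilon)$ to be chosen, and use this to construct an unavoidable $t$-coloring in $G$.

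First, I would exploit that each $v \in V_{\text{high}}$ has many red neighbours in $V(G) \setminus V_{\text{high}}$, making the bipartite red graph between these two classes dense (density at least $\varepsilon/2$ once $n$ is large). Proposition~\ref{drc2} then supplies $X \subset V_{\text{high}}$ and $Y \subset V(G) \setminus V_{\text{high}}$, each of size $\RR(L)$ for a large integer $L$ to be chosen, with all $X$-$Y$ edges red. Ramsey's theorem inside each of $X$ and $Y$ produces a monochromatic $K_L$: if either monochromatic $K_L$ is blue, I would extract a blue $K_t$ from it and pair it with any monochromatic $K_t$ on the other side, so that the induced coloring on these $2t$ vertices has blue forming either a single $K_t$ or two disjoint $K_t$'s, producing an unavoidable pattern.

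The main obstacle is the remaining case, where both Ramsey applications produce red cliques $X_1, Y_1$ of size $L$, so that $Z := X_1 \cup Y_1$ is a red $K_{2L}$ and is not itself an unavoidable pattern. I would overcome this by leveraging that $Y_1 \subset V(G) \setminus V_{\text{high}}$: each $y \in Y_1$ has almost $(1-\varepsilon)n$ blue neighbours outside $Z$, and a simple reverse Markov argument then shows that at least $n/2$ vertices $w \in V(G) \setminus Z$ have at least $t$ blue edges into $Y_1$, provided $L$ is chosen large compared to $t/\varepsilon$. Classifying each such $w$ by its ``type'' $\tau(w) \in \{\text{red}, \text{blue}\}^{Z}$ recording the colours of the edges from $w$ to $Z$, a pigeonhole over the $2^{2L}$ possible types yields a single type $\tau^*$ shared by a set $W^*$ of at least $\RR(t)$ vertices, and in which $\tau^*$ has at least $t$ blue positions inside $Y_1$.

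Finally, applying Ramsey inside $W^*$ extracts a monochromatic $K_t$ which I call $B^*$, and I would choose $A \subset Y_1$ of size $t$ from the blue positions of $\tau^*$. Then $A$ is a red $K_t$ (being a sub-clique of the red clique on $Z$) disjoint from $B^*$, and all $A$-$B^*$ edges are blue by construction; on the $2t$ vertices of $A \cup B^*$, the induced coloring has red forming a single $K_t$ on $A$ (if $B^*$ is blue) or two disjoint $K_t$'s on $A$ and $B^*$ (if $B^*$ is red), yielding an unavoidable $t$-coloring in either case and contradicting our assumption on $G$.
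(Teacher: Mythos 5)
Your plan has the right shape, and the ``type''-classification mechanism you use to close out the hard case (both Ramsey applications produce red cliques) is a genuinely different and rather elegant alternative to what the paper does: the paper instead builds a second constant-size set $S_b$ of vertices with high \emph{blue} degree disjoint from $S_r$, shows $S_r$ has no blue $K_m$ and $S_b$ has no red $K_m$, and then applies bipartite Ramsey between $S_r$ and $S_b$ to force a red clique, a blue clique, and a monochromatic complete bipartite graph between them. Your reverse-Markov-plus-types argument achieves the same end without needing the $S_b$ apparatus. Steps 3--9 of your sketch (including both case analyses that certify the unavoidable pattern) check out.

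However, there is a real gap at the very first step. You assert that each $v\in V_{\text{high}}$ ``has many red neighbours in $V(G)\setminus V_{\text{high}}$'', so that the red bipartite graph between $V_{\text{high}}$ and its complement has density at least $\eps/2$. This is not justified, and it can fail: nothing prevents the red neighbourhoods of the vertices in $V_{\text{high}}$ from lying almost entirely \emph{inside} $V_{\text{high}}$. Concretely, if $|V_{\text{high}}| \geq c\eps n$ for a suitable constant $c<1$, then the pigeon-hole estimate $\Deg_R(v)\ge \eps n$ no longer forces red edges to leave $V_{\text{high}}$, and the bipartite red density can be arbitrarily small (indeed zero). Your subsequent appeal to Proposition~\ref{drc2} then has no hypothesis to act on, and --- crucially --- you need $Y\subset V(G)\setminus V_{\text{high}}$ so that $Y_1$ consists of low-red-degree vertices for the blue-neighbourhood count; applying drc2 between $V_{\text{high}}$ and \emph{all} of $V(G)\setminus V_{\text{high}}$ won't help if the density there is small, and applying it between a constant-size $X_0\subset V_{\text{high}}$ and $V(G)\setminus X_0$ would give a $Y$ that may land back inside $V_{\text{high}}$. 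A short computation shows the density bound you want holds only when $|V_{\text{high}}|\lesssim \eps n$; the complementary regime $|V_{\text{high}}|\gtrsim \eps n$ requires a separate argument (e.g.\ Ramsey inside $V_{\text{high}}$ together with an extra step to locate a blue clique and control the bipartite colours), which is not in your proposal. As written, the proof is incomplete.
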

	\begin{proof}
		We fix $C_1$ to be large enough, with the benefit of hindsight, to support the argument that follows.

		If the set $S_r$ of vertices $x$ with $\Deg_R(x) \ge \eps n$ has size at most $C_1$, the claim follows by taking $S = S_r$. Hence, assume that $|S_r| \ge C_1$. As blue is the most common colour, we know that $e(B) \ge \binom{n}{2} / 2$, so there are at least $n/8$ vertices $x$ for which $\Deg_B(x) \ge n/4$; let $S_b$ be a set of $C_1$ such vertices disjoint from $S_r$.

		We claim that there is an $m = m(t, \eps)$ such that $S_r$ does not contain any blue copies of $K_m$, and such that $S_b$ does not contain any red copies of $K_m$. To see this, assume that $m$ is sufficiently large and that $X\subset S_r$ induces a blue copy of $K_m$. Then, provided $m$ is large enough, Propostion~\ref{drc2} implies that there is a subset $Y\subset X$ of order $t$ for which the common neighbourhood $|N_R(Y)|\ge \RR(t)$. Ramsey's theorem applied to $N_R(Y)$ now shows that there is an unavoidable $t$-colouring in $G$, a contradiction. Hence, we may assume that $S_r$ does not induce any blue copies of $K_m$ in $G$. The same argument, with the colours interchanged, allows us to assume that $S_b$ does not induce any red copies of $K_m$ in $G$.

		Now, from the bipartite form of Ramsey's theorem applied to the complete bipartite graph between $S_r$ and $S_b$, we may find $S'_r \subset S_r$ and $S'_b \subset S_b$, both of order $\RR(m)$, such that the complete bipartite graph between $S'_r$ and $S'_b$ is monochromatic. Applying Ramsey's theorem to each of $S'_r$ and $S'_b$ (combined with our earlier observation), we find a red copy of $K_m$ inside $S'_r$ and a blue copy of $K_m$ inside $S'_b$, which together yield an unavoidable $t$-colouring in $G$, a contradiction.
	\end{proof}

	We apply the previous claim with $\eps=1/(10t)$ to pass to a two-coloured complete graph $G'$ where all the vertices are incident to many blue edges, and as before, we denote the graphs spanned by the red and the blue edges of $G'$ by $R'$ and $B'$ respectively. 
	
	\begin{claim}
		There are no red copies of $K_t$ in $G'$.
	\end{claim}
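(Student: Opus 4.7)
The plan is to argue for a contradiction: assume $A = \{a_1,\dots,a_t\} \subset V(G')$ spans a red $K_t$ in $G'$, and try to extend $A$ to an unavoidable $t$-colouring on $2t$ vertices inside $G'$, which lives in $G$ and contradicts our standing assumption.

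First I would pin down what ``unavoidable'' means structurally. Unpacking the definition, a red $K_t$ on a vertex set $A$ can be completed to an unavoidable $t$-colouring simply by locating \emph{any} monochromatic (red or blue) copy of $K_t$ on a disjoint vertex set $B$ such that every $A$--$B$ edge is blue: if $B$ is blue we get the ``$K_t \sqcup K_t$ in one colour'' pattern, and if $B$ is red we get the ``two disjoint red $K_t$'s with a blue $K_{t,t}$ between them'' pattern.

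The second step is to produce a large candidate set for $B$. By Claim~\ref{claim:fewredvertices} applied with $\eps = 1/(10t)$, every vertex of $G'$ has red degree at most $2\eps n' = n'/(5t)$. Hence the set of vertices outside $A$ incident in red to at least one vertex of $A$ has size at most $t \cdot n'/(5t) = n'/5$. Therefore the common blue neighbourhood
\[
N \;=\; \bigcap_{i=1}^t N_{B'}(a_i)\;\subset\;V(G')\setminus A
\]
has size at least $n' - t - n'/5 \ge (3/4)n'$, provided $C$ (and hence $n'$) is sufficiently large.

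Finally, I apply Ramsey's theorem (Proposition~\ref{bipRam}) inside $N$: since $|N|\ge(3/4)n' \ge \RR(t)$ for $C$ large enough, $N$ contains a monochromatic copy of $K_t$ on some vertex set $B$. By construction, every edge between $A$ and $B$ is blue, and $A$, $B$ are disjoint red and monochromatic cliques of order $t$, so $G'[A\cup B]$ realises an unavoidable $t$-colouring, contradicting the assumption that $G$ contains no such colouring. There is no serious obstacle here — Claim~\ref{claim:fewredvertices} has already done the hard work of making every vertex extremely ``blue'', so the $t$-wise intersection of blue neighbourhoods is automatically huge, and Ramsey's theorem finishes the job.
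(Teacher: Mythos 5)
Your proof is correct and follows essentially the same route as the paper: take the common blue neighbourhood of the putative red $K_t$ (which is large because Claim~\ref{claim:fewredvertices} bounds every red degree), and apply Ramsey's theorem inside it to locate a monochromatic $K_t$, yielding an unavoidable $t$-colouring and a contradiction. The only difference is cosmetic — you spell out the case analysis on the colour of the second clique and track the constants a little more explicitly — but the argument is the same.
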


	\begin{proof}
		Suppose that $X$ forms a red clique on $t$ vertices in $G'$. Since $\Deg_{B'}(x) \ge (1-2\eps)n'$ for each $x\in X$, it follows that $|N_{B'}(X)| \ge n' - t(n'/10t) \ge n'/2 \ge \RR(t)$. By applying Ramsey's theorem to $N_{B'}(X)$, we find an unavoidable $t$-colouring in $G$, a contradiction.
	\end{proof}

	Observe that the number of red edges in $G'$ is at least $C_2 (n')^{2-1/t}$ for some $C_2 = C_2(t)$, since we have removed at most $C_1$ vertices and $C_1 n$ red edges in passing from $G$ to $G'$. Provided $C_2$ is large enough, it is easy to see that there are many red copies of $K_{t,t}$ in $G'$. 
	
	The second step in the proof is to find a reasonably `well distributed' collection of such copies that we may use to produce an unavoidable $t$-colouring. If $C_2$ is large enough, then it follows from Proposition~\ref{drc1} that there is a set $Y\subset V(G')$ of size at least $C_3 = C_3(t)$ such that every $X\subset Y$ of order $t$ satisfies $|N_{R'}(X)| \ge \RR(t)$.

	\begin{claim}\label{claim: rededges}
		There are no blue copies of $K_t$ in $G'[Y]$.
	\end{claim}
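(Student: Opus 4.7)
The plan is to derive this claim by combining the defining property of the set $Y$ with Ramsey's theorem, using the previous claim that $G'$ has no red copy of $K_t$ to close off one case. Specifically, I would argue by contradiction: assume that some $t$-set $X\subset Y$ induces a blue clique in $G'$.

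Because $X\subset Y$ has exactly $t$ elements, the defining property of $Y$ (the output of Proposition~\ref{drc1}) guarantees $|N_{R'}(X)|\ge \RR(t)$, i.e.\ there are at least $\RR(t)$ vertices of $G'$ each joined to every vertex of $X$ by a red edge. Applying Ramsey's theorem (Proposition~\ref{bipRam}) inside $N_{R'}(X)$ produces a monochromatic $K_t$ on some set $X'\subset N_{R'}(X)$, disjoint from $X$.

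I would then split into two cases according to the colour of $X'$. If $X'$ is a red clique of order $t$, it is a red $K_t$ in $G'$, directly contradicting the preceding claim. If instead $X'$ is a blue clique of order $t$, then the $2t$-vertex set $X\cup X'$ induces a two-colouring in which $X$ and $X'$ are two vertex-disjoint blue $K_t$'s and all edges between $X$ and $X'$ are red (by the choice of $X'\subset N_{R'}(X)$). Thus the blue colour class on $X\cup X'$ consists of exactly two disjoint cliques of order $t$, which is an unavoidable $t$-colouring — contradicting the standing assumption that $G$ contains no such pattern.

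I do not expect any real obstacle here: the work has already been done in producing the set $Y$ via dependent random choice and in ruling out red $K_t$'s in $G'$. The only thing to be slightly careful about is to verify that the second case genuinely matches the definition of an unavoidable $t$-colouring (two disjoint monochromatic cliques of order $t$ in one colour), and to note that finding such a configuration inside $G'\subset G$ suffices since $G'$ is an induced subgraph of $G$.
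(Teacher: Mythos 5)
Your argument is correct and is essentially the paper's proof, just spelled out slightly more explicitly: the paper also takes a blue $K_t$ on $X\subset Y$, uses the defining property of $Y$ to get $|N_{R'}(X)|\ge \RR(t)$, applies Ramsey's theorem there, invokes the preceding claim (no red $K_t$ in $G'$) to force the monochromatic clique to be blue, and concludes with the two-disjoint-blue-$K_t$ unavoidable pattern. Your explicit two-case split and the check that $X\cup X'$ matches the definition of an unavoidable $t$-colouring are exactly what the paper's terser phrasing leaves implicit.
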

	\begin{proof}
		If $X \subset Y$ induces a blue clique on $t$ vertices in $G$', then since $|N_{R'}(X)| \ge \RR(t)$, the previous claim combined with Ramsey's theorem allows us to find a blue clique of order $t$ within $N_{R'}(X)$, and consequently, an unavoidable $t$-colouring in $G$; again, we have a contradiction.
	\end{proof}
	\begin{claim}
		$Y$ induces at least $|Y|^2/t^2$ red edges in $G'$.
	\end{claim}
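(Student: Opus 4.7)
The plan is to apply Tur\'an's theorem to the blue subgraph of $G'[Y]$. By Claim~\ref{claim: rededges}, the blue graph on $Y$ contains no copy of $K_t$, so Tur\'an's theorem gives an upper bound on the number of blue edges induced by $Y$, and this immediately yields a matching lower bound on the number of red edges.

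Concretely, I would argue as follows. Since $G'[Y]$ contains no blue $K_t$, Tur\'an's theorem implies that the number of blue edges in $G'[Y]$ is at most $(1 - 1/(t-1)) \binom{|Y|}{2}$. Subtracting from the total number of edges in $G'[Y]$, the number of red edges in $G'[Y]$ is at least
\[
	\binom{|Y|}{2} - \left(1 - \frac{1}{t-1}\right)\binom{|Y|}{2} = \frac{1}{t-1}\binom{|Y|}{2} = \frac{|Y|(|Y|-1)}{2(t-1)}.
\]
The claim then follows by comparing this with $|Y|^2/t^2$: the inequality $|Y|(|Y|-1)/(2(t-1)) \ge |Y|^2/t^2$ reduces to $(t^2 - 2t + 2)|Y| \ge t^2$, which holds provided $|Y|$ is large enough, and this is guaranteed by taking $C_3$ sufficiently large in the definition of $Y$ (which in turn amounts to fixing $C$ in the statement of Theorem~\ref{colourthm} large enough).

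There is no real obstacle here: the bound from Tur\'an's theorem is in fact much stronger (by roughly a factor of $t/2$) than what the claim asks for, so the routine calculation above leaves plenty of room, and the only thing to verify is that the absolute constant $C_3$ can be chosen large enough to absorb the $-1$ in $|Y|(|Y|-1)$. Note that it is essential here that we work on the subset $Y$ rather than on all of $V(G')$: only on $Y$ do we have the absence of blue $K_t$'s coming from Claim~\ref{claim: rededges}, and it is this two-sided $K_t$-freeness (red from the previous claim, blue from Claim~\ref{claim: rededges}) that enables the symmetric application of Tur\'an's theorem used here.
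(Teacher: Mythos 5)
Your proof is correct, and it takes a genuinely different route from the paper's. The paper argues by elementary double-counting: every $t$-subset of $Y$ contains a red edge by Claim~\ref{claim: rededges}, each red edge lies in at most $\binom{|Y|}{t-2}$ such $t$-subsets, so there are at least $\binom{|Y|}{t}/\binom{|Y|}{t-2} \ge |Y|^2/t^2$ red edges — a one-line pigeonhole that produces exactly the bound in the claim with no reference to Tur\'an's theorem. You instead invoke Tur\'an's theorem on the blue graph, which yields the stronger bound $\frac{1}{t-1}\binom{|Y|}{2}$ on the red edges, and then you verify that this dominates $|Y|^2/t^2$ for $|Y|$ large; your verification is correct (indeed for $t\ge 3$ one needs only $|Y|\ge 2$, so the appeal to ``$C_3$ large'' is harmless overkill). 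Your argument buys a sharper constant at the cost of importing Tur\'an's theorem and a small numerical check; the paper's counting gives the weaker bound $|Y|^2/t^2$ directly, which is all that is needed, and is entirely self-contained. Both are fine; the paper's is the more economical choice here since the improved constant is never used.

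One small remark: you describe the argument as a ``symmetric application of Tur\'an's theorem'' using two-sided $K_t$-freeness, but in fact only the blue $K_t$-freeness from Claim~\ref{claim: rededges} is used to bound the blue edges; the red $K_t$-freeness from the earlier claim plays no role in this particular step.
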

	\begin{proof}
		Indeed, by the previous claim, every subset of $Y$ of order $t$ contains at least one red edge, and each red edge belongs to at most $\binom{|Y|}{t-2}$ such sets. Hence, $Y$ induces at least $\binom{|Y|}{t}/ \binom{|Y|}{t-2} \ge |Y|^2/t^2$ red edges in $G'$.
	\end{proof}

	We may now finish as follows. If $C_3$ is large enough, then it follows from the previous claim and Proposition~\ref{drc2} that $Y$ contains a red copy of $K_{\RR(t),\RR(t)}$. Applying Ramsey's theorem to each of the vertex classes of such a red copy of $K_{\RR(t),\RR(t)}$ in $Y$ and utilising the claims above, we find an unavoidable $t$-colouring in $G$, contradicting our initial assumption that $G$ is a counterexample to the result.
\end{proof}

\section{Tournaments}\label{sec:tour}
In this section, we deal with unavoidable tournaments. In what follows, to save space, we write $\DD_t$ to denote the unavoidable $t$-tournament on $3t$ vertices. As before, we start by presenting an extremal construction complementing Theorem~\ref{tourthm}.

\begin{proposition}\label{tourtight}
	For each integer $t \ge 3$ and all large enough $n \in \N$, there is a tournament on $n$ vertices not containing $\DD_t$ that is $\delta_t$-far from being transitive, where $\delta_t (n) =  10^{-6}\ex(n,K_{\lceil t/2 \rceil,t})/n^2$.
\end{proposition}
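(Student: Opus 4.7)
The plan is to construct $T$ via a bipartite-reversal scheme from a $K_{\lceil t/2 \rceil, t}$-free graph. Start with a $K_{\lceil t/2 \rceil, t}$-free graph $G$ on $[n]$ with $e(G) = \ex(n, K_{\lceil t/2 \rceil, t})$, pruned if necessary so that its maximum degree is $O(n^{1 - 1/\lceil t/2 \rceil})$ (costing at most a constant fraction of edges, via the Kov\'ari--S\'os--Tur\'an degree bound). By averaging over balanced bipartitions of $[n]$, fix $V(G) = A \sqcup B$ with $|A|, |B| = \Theta(n)$ so that the bipartite subgraph $H := G[A, B]$ satisfies $e(H) \ge e(G)/3$. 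Now define the tournament $T$ on $[n]$: place all of $A$ before all of $B$ in the global linear order, make $T[A]$ and $T[B]$ transitive along internal orders of $A$ and $B$ (to be refined later), and orient each cross pair $\{a, b\}$ with $a \in A$, $b \in B$ as $a \to b$ if $ab \notin H$ and as $b \to a$ if $ab \in H$.

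For the claim that $T$ has no $\DD_t$, suppose for contradiction that $W \subseteq V(T)$ carries a copy with parts $V_1, V_2, V_3$. Since $T[A]$ and $T[B]$ are transitive and any transitive subset of $\DD_t$ must avoid at least one part (else it would contain a cyclic triangle), each of $W \cap A$ and $W \cap B$ is contained in some $V_i \cup V_{i+1}$ with indices modulo $3$. Thus $W \cap A \subseteq V_i \cup V_{i+1}$ and $W \cap B \subseteq V_j \cup V_{j+1}$ for some $i, j$, and since $|W| = 3t$, the pairs $\{i, i+1\}$ and $\{j, j+1\}$ must share exactly one index $k$. A case analysis on whether the shared part $V_k$ stays entirely on one side of the bipartition or splits strictly as $V_k = (V_k \cap A) \sqcup (V_k \cap B)$, combined with the forced placement of the non-shared parts $V_{k - 1}, V_{k + 1}$ on opposite sides, then yields one of two outcomes: either a complete $K_{t, t}$ is forced into $H$ (from the unique cycle edge class oriented against the natural $A \to B$ direction), or a pair of complete bipartite patterns of sizes $\abs{V_k \cap A} \times t$ and $t \times \abs{V_k \cap B}$ is forced into $H$. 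In the split case, at least one of $\abs{V_k \cap A}, \abs{V_k \cap B}$ is $\ge \lceil t/2 \rceil$ since they sum to $t$; in the unsplit case $K_{t, t} \supseteq K_{\lceil t/2 \rceil, t}$; so either way we obtain $K_{\lceil t/2 \rceil, t} \subseteq H$, contradicting the $K_{\lceil t/2 \rceil, t}$-freeness of $H$.

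For the claim that $T$ is $\delta_t$-far from transitive, I lower-bound the minimum feedback arc set $\mathrm{fas}(T)$ via cyclic triangles: as each cyclic triangle contains at least one backward arc in any ordering and each directed edge lies in at most $n - 2$ triangles, we have $\mathrm{fas}(T) \ge c(T)/(n - 2)$, where $c(T)$ is the number of cyclic triangles in $T$. Since $T[A]$ and $T[B]$ are transitive, all cyclic triangles of $T$ are of mixed type $\{a_1, a_2, b\}$ or $\{a, b_1, b_2\}$; for fixed $b \in B$, the cyclic triples $\{a_1, a_2, b\}$ are precisely the pairs $a_1 < a_2$ in $A$'s internal order with $a_1 \in N_H(b)$ and $a_2 \notin N_H(b)$. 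With $A$'s internal order drawn uniformly at random, this has expectation $d_H(b)(|A| - d_H(b))/2 = \Omega(d_H(b) \cdot n)$ per $b$, using the max-degree bound $d_H(b) = o(n)$ on $H$. Summing over $b$ and adding the symmetric contribution from $\{a, b_1, b_2\}$-triples yields expected $c(T) = \Omega(n \cdot e(H)) = \Omega(n \cdot \ex(n, K_{\lceil t/2 \rceil, t}))$; fixing internal orders attaining this expectation and applying the feedback bound yields $\mathrm{fas}(T) = \Omega(\ex(n, K_{\lceil t/2 \rceil, t}))$, comfortably exceeding the required $10^{-6} \ex(n, K_{\lceil t/2 \rceil, t}) = \delta_t n^2$.

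The main obstacle is the case analysis in the no-$\DD_t$ argument, specifically the sub-case where the shared part $V_k$ strictly splits across the bipartition: here only a $K_{\lceil t/2 \rceil, t}$ (rather than a full $K_{t, t}$) is forced into $H$, which is precisely what pins the extremal bound at $\ex(n, K_{\lceil t/2 \rceil, t})$ rather than the larger $\ex(n, K_{t, t})$.
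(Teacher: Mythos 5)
Your construction is the same as the paper's: take a $K_{\lceil t/2\rceil,t}$-free graph, pass to a bipartite subgraph $H$ between $A$ and $B$, place $A$ before $B$, make $T[A]$ and $T[B]$ transitive, and reverse exactly the edges of $H$. Your verification that $T$ has no $\DD_t$ also runs along the same lines as the paper's (the paper's version is slightly tighter: it first argues that the class receiving edges from the $B$-class, rather than sending them, must lie inside $A$, which immediately reduces the problem to one split class), so this part is fine.

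The second half of your argument is where you genuinely diverge from the paper, and in a good way. The paper lower-bounds the backward edges directly in a hypothetical minimising ordering $\tau$ by locating a ``balanced interval'' and running a fairly delicate density argument; this takes a couple of pages. Your route --- count cyclic triangles, note that every cyclic triangle forces a backward arc in any ordering while each arc lies in at most $n-2$ triangles, and then optimise the internal orders of $A$ and $B$ by averaging to get $c(T)=\Omega(n\cdot e(H))$ --- is considerably shorter and more transparent, and it gives the needed bound with plenty of room to spare. The observation that all cyclic triangles of $T$ are mixed (because $T[A]$ and $T[B]$ are transitive) and that for fixed $b$ the cyclic $\{a_1,a_2,b\}$-triangles correspond exactly to ``split'' pairs across $N_H(b)$ with the $H$-neighbour first is correct, and the expectation computation $d_H(b)(|A|-d_H(b))/2$ is right.

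The one place you need to be more careful is the pruning step. You assert that $G$ can be pruned to maximum degree $O(n^{1-1/\lceil t/2\rceil})$ at the cost of only a constant fraction of edges, citing a ``Kov\'ari--S\'os--Tur\'an degree bound.'' KST bounds the number of edges, hence the \emph{average} degree, of a $K_{r,t}$-free graph, not the maximum degree, and it is not clear that this pruning is achievable in general without discarding too many edges. Fortunately you do not need anything that strong. All your triangle count requires is $|A|-d_H(b)=\Omega(n)$ for the vertices $b$ carrying most of $e(H)$. This follows from a direct double count: if $S\subset B$ consists of vertices with $d_H(b)\ge|A|/2$, then $e(H[A,S])\ge|S|\,|A|/2$, while KST applied to the $K_{r,t}$-free bipartite graph $H[A,S]$ gives $e(H[A,S])\lesssim|S|\,|A|^{1-1/r}+|A|$; for $|A|=\Theta(n)$ large this forces $|S|=O(1)$. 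Those $O(1)$ exceptional vertices contribute only $O(n)=o(e(H))$ edges, so dropping them from the sum still gives $c(T)=\Omega(n\cdot e(H))$. With this fix your argument goes through, and the constants you sketched comfortably beat $10^{-6}$.
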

Again, if the conjectural bound~\eqref{kstconj} holds, then this tells us that we actually have $\delta_t(n) \gg n^{-1/\lceil t/2 \rceil}$ in the above construction for each $t \ge 3$. While the construction demonstrating the above proposition is analogous to the construction for colourings presented earlier in Section~\ref{sec:col}, the argument justifying this construction is somewhat more involved.
\begin{proof}[Proof of Proposition~\ref{tourtight}]
	Given $t\ge 3$, we first set $r = r(t) = \lceil t/2\rceil \ge 2$. To prove the result, we shall now construct, for all large enough $n \in \N$, a tournament $T$ on $n$ vertices not containing any copies of $\DD_t$ with at least $\ex(n, K_{r,t}) / 10^{6}$ backward edges in any ordering of its vertex set. As usual, we assume that $n$ is large enough to support the estimates that follow. Note that since $t\ge r \ge 2$, we have $\ex(n, K_{r,t}) \gg n^{3/2}$; see~\citep{survey}, for example.

	Let $G$ be an $n$-vertex graph with $\ex(n,K_{r, t}) - 1$ edges which does not contain a copy of $K_{r, t}$ and let $H\subset G$ be a spanning bipartite subgraph of $G$ with at least $e(G)/2$ edges with vertex classes $A$ and $B$. We construct a tournament $T$ on the same vertex set as $H$ as follows: fix an ordering $\sigma$ of the vertices of $H$ where all vertices of $A$ precede all the vertices of $B$, and for every edge $xy\in E(H)$ with $x\in A$ and $y\in B,$ we direct the corresponding edge in $T$ backwards in $\sigma$ from $y$ to $x$ in $T$, and every other edge forwards. In what follows, we speak about the edges in $H$ and the edges in $T$ directed from $B$ to $A$ interchangeably, since these are in one-to-one correspondence with each other. 

	It is not hard to see that $T$ does not contain any copies of $\DD_t$, a fact that we record below.
	\begin{claim}
		$T$ does not contain any copies of $\DD_t$.
	\end{claim}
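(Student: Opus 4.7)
The plan is to suppose for contradiction that $T$ contains a copy of $\DD_t$ with cyclic parts $V_1, V_2, V_3$ (indices mod $3$), and to deduce that $H$ — and hence $G$ — contains a $K_{r,t}$. For each $i$ I would write $A_i = V_i \cap A$ and $B_i = V_i \cap B$, with sizes $a_i, b_i$ satisfying $a_i + b_i = t$. The orientation convention of the construction directs every edge inside $A$, every edge inside $B$, and every non-$H$ edge between $A$ and $B$ forward in $\sigma$, and every edge of $H$ backward from $B$ to $A$. So the required complete bipartite pattern of edges $V_i \to V_{i+1}$ forces three things at once: $A_i < A_{i+1}$ in $\sigma$ whenever both are non-empty, $B_i < B_{i+1}$ similarly, and the complete bipartite graph on parts $B_i$ and $A_{i+1}$ to sit entirely inside $H$.

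The first immediate consequence is that the cyclic chains $A_1 < A_2 < A_3 < A_1$ and $B_1 < B_2 < B_3 < B_1$ cannot both be realised, so there are indices $i$ with $A_i = \emptyset$ and $j$ with $B_j = \emptyset$; since each $V_k$ is non-empty we must have $i \ne j$. The argument then reduces to two prototypes, depending on whether $j \equiv i + 1$ or $j \equiv i - 1 \pmod{3}$. If $j = i + 1$, then $V_i \subseteq B$ and $V_{i+1} \subseteq A$, so the cyclic arc $V_i \to V_{i+1}$ alone forces $K_{t,t} \subseteq H$, which certainly contains $K_{r,t}$ — contradicting that $G$, and hence $H$, is $K_{r,t}$-free. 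If instead $j = i - 1$, let $k$ be the third index; then $V_i \subseteq B$ and $V_j \subseteq A$, while $V_k = A_k \sqcup B_k$ may be mixed. The two arcs $V_i \to V_k$ and $V_k \to V_j$ contribute the complete bipartite graphs $K_{t, a_k}$ and $K_{b_k, t}$ respectively to $H$. Since $a_k + b_k = t$ and $r = \lceil t/2 \rceil$, one of $a_k, b_k$ is at least $r$, so $H$ again contains $K_{r,t}$ — a contradiction.

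The crux of the argument is the final pigeonhole step on $a_k + b_k = t$: this is precisely what calibrates the exponent $r = \lceil t/2 \rceil$ appearing in the statement, and explains why the bound in Theorem~\ref{tourthm} involves $\lceil t/2 \rceil$ rather than $t$. Everything else is bookkeeping — translating the orientation convention of $T$ into combinatorial constraints on the $A$/$B$-trace of the $\DD_t$ cycle, and exploiting the cyclic symmetry of $\DD_t$ to collapse the six possible $(i,j)$ sub-cases into the two prototypes above. The main potential pitfall is notational: getting the cyclic indexing right and making sure the case split is genuinely exhaustive without double-counting. Beyond that I do not foresee any serious obstacle.
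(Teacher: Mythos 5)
Your proposal is correct and follows essentially the same argument as the paper: deduce from the transitivity of $T[A]$ and $T[B]$ that some part of the $\DD_t$ lies entirely in $B$ and another entirely in $A$, dispose of the adjacent case by a direct $K_{t,t}$, and in the remaining case apply the pigeonhole $a_k + b_k = t \Rightarrow \max(a_k,b_k) \ge \lceil t/2\rceil$ to the mixed part. The paper streamlines the case split slightly (fixing $Z \subset B$, then using $K_{r,t}$-freeness to force $Y \subset A$ rather than $X \subset A$), but the underlying reasoning and the crucial pigeonhole step are identical to yours.
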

	\begin{proof}
		Suppose to the contrary that there is a copy of $\DD_t$ in $T$, and let $X, Y, Z \subset V(T)$ be the three transitive vertex classes of this copy of $\DD_t$ in $T$, with edges oriented from $X$ to $Y$, from $Y$ to $Z$, and from $Z$ to $X$.

		Observe that it cannot happen that each of $X$, $Y$ and $Z$ meet $A$, since this would yield a cyclic triangle in $A$, while $T[A]$ is transitive by construction; hence, suppose without loss of generality that $Z \subset B$. The same argument applied to $B$ shows that one of $X$ or $Y$ must necessarily be contained in $A$; since there are no copies of $K_{r,t}$ in $H$ (which specifies the set of edges directed from $B$ to $A$), it must be the case that $Y \subset A$.

		We now know that $Y \subset A$ and $Z \subset B$. Of course, either $|X \cap A| \ge \lceil t/2 \rceil = r$ or $|X \cap B| \ge r$. If the former happens, then we find a copy of $K_{r, t}$ between $X \cap A$ and $Z$ in $H$, and if the latter happens, then we find a copy of $K_{r, t}$ between $X \cap B$ and $Y$ in $H$, a contradiction regardless.
	\end{proof}

	The bulk of the work, which we accomplish in the next claim, lies in demonstrating that $T$ is not too close to being transitive.
	\begin{claim}
		In any ordering of $V(T)$, there are at least $e(H)/10^{5}$ backward edges.
	\end{claim}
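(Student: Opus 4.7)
The plan is to fix any ordering $\pi$ of $V(T)$ and show that the number of backward edges satisfies $b(\pi) \geq e(H)/10^5$. I would decompose $b(\pi) = b_{AA}(\pi) + b_{BB}(\pi) + b_{AB}(\pi)$ according to whether the backward edge lies inside $A$, inside $B$, or crosses between the two parts. Since $T[A]$ and $T[B]$ are transitive with orderings $\sigma_A$ and $\sigma_B$, the within-class terms equal $\mathrm{inv}(\pi|_A, \sigma_A)$ and $\mathrm{inv}(\pi|_B, \sigma_B)$ respectively. If either of these is already at least $e(H)/10^5$ we are done, so we may assume $\pi|_A$ and $\pi|_B$ are close to $\sigma_A$ and $\sigma_B$ and focus on the bipartite term.

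Next, I would identify the bipartite term with a symmetric difference. Setting $X(\pi) := \{(a,b) \in A \times B : b <_\pi a\}$ and examining the two types of cross edges of $T$ (the $H$-edges, oriented $b \to a$, are backward in $\pi$ iff $(a,b) \notin X(\pi)$; the non-$H$ cross edges, oriented $a \to b$, are backward iff $(a,b) \in X(\pi)$) gives $b_{AB}(\pi) = |H \triangle X(\pi)|$. The crucial structural fact is that, when $A$ and $B$ are listed in $\pi$-order, the row-neighbourhoods of $X(\pi)$ form a nested chain of initial segments of $B$; in other words, $X(\pi)$ is a \emph{staircase} in the $\pi|_A \times \pi|_B$ grid. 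So it suffices to prove the following claim: for any linear orders $\alpha$ on $A$ and $\beta$ on $B$ and any staircase $X$ in the $\alpha \times \beta$ grid, $|H \triangle X| \geq e(H)/10^5$.

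To prove this, I would split on $|X|$. If $|X| \leq e(H)/2$ then $|H \triangle X| \geq |H \setminus X| \geq e(H)/2$. Otherwise $|X| \geq e(H)/2$, and since $e(H) = \Omega(n^{2-1/r}) \gg n$, we have $|X| \gg n$. Writing the staircase heights as $k_1 \leq \ldots \leq k_{n_1}$, a short count shows that if $k_{n_1 - r + 1} < t$ then $|X| \leq (n_1 - r + 1)(t-1) + (r-1)n_2 = O(n)$, a contradiction; hence $k_{n_1 - r + 1} \geq t$, the top $r$ rows contain a complete $K_{r,t}$, and moreover every copy of $K_{r,t}$ in $X$ must lie inside the ``body'' consisting of the columns $[1, k_{n_1 - r + 1}]$. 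The ``tail'' outside the body has at most $rn$ edges, so the body still has $\Omega(|X|)$ edges and is a capped staircase in which the top $r$ rows are full. Letting $M$ denote the number of $K_{r,t}$-copies in $X$ and $L$ the maximum number of them through a single edge, a careful count inside the body yields $M/L \geq c(r,t)\,|X|$ for some $c(r,t) > 0$. Since $H$ is $K_{r,t}$-free, every copy of $K_{r,t}$ in $X$ must contain at least one edge of $X \setminus H$, and therefore $|X \setminus H| \geq M/L \geq c(r,t)\, e(H)/2 \geq e(H)/10^5$ for an appropriate choice of $c(r,t)$, contradicting $|H \triangle X| < e(H)/10^5$.

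I expect the main obstacle to be precisely the supersaturation-style bound $M/L \gtrsim |X|$ for an arbitrary large staircase. In the ``uniform'' case, when all heights $k_i$ are comparable, this is easy: each edge lies in roughly $rt M/|X|$ copies of $K_{r,t}$, so $M/L \approx |X|/(rt)$, which is more than enough. The delicate case is a very non-uniform staircase, where a handful of rows of height close to $n_2$ can dominate $|X|$ while contributing few $K_{r,t}$-copies per edge; here one needs to combine the body/tail decomposition with a dual weighting on $K_{r,t}$-copies (for instance assigning each copy $(I,J)$ the weight $1/[\binom{n_1}{r-1}\binom{k_{\min I}}{t-1}]$) to ensure simultaneously that the total weight is $\Omega(|X|)$ and that the weight through each edge stays bounded, obtaining the required fractional hitting-set lower bound.
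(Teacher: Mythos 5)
Your reduction to the staircase problem is a genuinely elegant reframing: identifying the cross term of the backward-edge count with $|H\triangle X(\pi)|$ where $X(\pi)$ is a staircase (equivalently, a bipartite graph with nested row-neighbourhoods), and noting that it suffices to show every such staircase satisfies $|H\triangle X|\ge e(H)/10^{5}$, is a clean and correct observation that the paper does not make explicit. Unfortunately, the key supersaturation lemma you need --- that every staircase $X$ with $|X|\ge e(H)/2$ has min hitting set for $K_{r,t}$ of size $\Omega(|X|)$ --- is precisely where the proposal stalls, and the two arguments you sketch for it do not work. The bare $M/L$ bound fails for genuinely two-scale staircases: take $r=2$, $t=4$, $n_1=n_2=n/2$, heights $k_i=\sqrt{n}$ for $i\le n_1-s$ and $k_i=n/2$ for $i>n_1-s$ with $s=n^{2/5}$. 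Then $|X|\approx n^{3/2}$, while $M\approx n^{4.8}$ and $L\approx n^{3.4}$ (the count through the edge $(n_1,1)$), giving $M/L\approx n^{1.4}\ll|X|$; the body/tail trimming does nothing here since $k_{n_1-r+1}=n/2$ makes the ``body'' everything. The specific dual weighting $w(I,J)=\bigl[\binom{n_1}{r-1}\binom{k_{\min I}}{t-1}\bigr]^{-1}$ also does not rescue this: a short calculation shows the total weight is $\approx\sum_i\bigl((n_1-i)/n_1\bigr)^{r-1}k_i$, which is $o(|X|)$ whenever the heights $k_i$ are concentrated on a thin band of rows near the top --- exactly the degenerate staircases you must handle, since the heights are non-decreasing in $i$. (The hitting-set statement itself appears to be true in these examples, via a separate argument restricted to the ``thin'' rows, but that is an additional lemma you have not supplied, and a naive block decomposition loses a $\log n$ factor that kills the $10^{-5}$ constant.) So there is a genuine gap: the crux of the argument is flagged but not closed.

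For comparison, the paper avoids the supersaturation problem entirely. It works directly with the ordering $\tau$, proves the auxiliary Claim~\ref{manyedges} --- any $A'\subset A$, $B'\subset B$ with $B'<A'$ in $\tau$ spans at most $e(H)/10^{5}$ edges of $H$, via Proposition~\ref{drc2} (bipartite Tur\'an degeneracy) and the observation that missing cross-pairs are backward edges --- and then runs a ``balanced interval'' refinement argument to locate disjoint $A'$, $B'$ in the wrong order that nonetheless carry a positive fraction of $e(H)$, contradicting Claim~\ref{manyedges}. That iterative localisation (losing only a constant fraction per step, terminating in $O(\log n)$ steps) is the paper's substitute for your missing supersaturation bound, and is where the real work lies. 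If you want to salvage your approach, you would need either a correct fractional hitting-set weighting for staircases or to import an interval-refinement step analogous to the paper's to locate a near-uniform sub-block of the staircase first.
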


	\begin{proof}
		Suppose this does not hold, and let $\tau = (v_1,v_2,\ldots, v_n)$ be an ordering of the vertices of $T$ that minimises the number of backward edges, so that the number of backward edges in $\tau$ is less than $e(H)/10^5$. The basic idea now is simple. Since many of the backward edges in $\sigma$ are forward edges in $\tau$, we expect to be able to find large sets $A' \subset A$ and $B' \subset B$ such that $B'$ precedes $A'$ in $\tau$ with $H[A', B']$ containing a positive fraction of the edges in $H$. However, we may then use the fact that $H$ has no copies of $K_{r, t}$ to conclude that at least half of the edges, say, between $A'$ and $B'$ in $T$ must be directed backward from $A'$ to $B'$ in $\tau$. While this sketch is conceptually straightforward, filling in the details however necessitates dealing with some technicalities. 
		
		Let us start by recording the following fact that we shall make use of repeatedly.
		\begin{claim}\label{manyedges}
		If $A' \subset A$ and $B' \subset B$ are such that every vertex of $B'$ precedes every vertex of $A'$ in $\tau$, then $e(H[A', B']) \le e(H)/10^5$.
		\end{claim}
		\begin{proof}
		Suppose the claim fails for some $A' \subset A$ and $B' \subset B$. Since $e(H) \gg n^{3/2}$, we must have $|A'|, |B'| \gg \sqrt{n}$. Now, the graph $H[A', B']$ contains no copies of $K_{r,t}$, and both $|A'|$ and $|B'|$ are sufficiently large, so it follows from Proposition~\ref{drc2} that $e(H[A', B']) \le |A'||B'|/2$. Consequently, each non-edge of $H[A', B']$, of which there are at least $e(H)/10^5$, is directed from $A'$ to $B'$ in $T$. This yields at least $e(H)/10^5$ backward edges in $\tau$, which is a contradiction.
		\end{proof}
		
		Write $|A| = a$ and $|B| = b$ so that $a+b = n$, and define $X$ and $Y$ to be the first $a$ vertices and the last $b$ vertices in $\tau$ respectively. Let $S_B = B \cap X$ be those vertices of $B$ appearing in the first $a$ vertices in $\tau$, and let $S_A = A \cap Y$. Of course, we have $|S_B| = |S_A|$; we write $m$ for their common size.

		Observe that since there are at most $e(H)/10^5$ backward edges in $\tau$, at least $e(H)/2$ edges of $H$ are forward edges in $\tau$; each such forward edge must necessarily be directed out of some vertex in $S_B$ or into some vertex in $S_A$ (or both). Hence, we assume by pigeonholing that $F \subset E(H)$ is some set of $e(H)/4$ edges of $H$ that are all forward edges in $\tau$ directed into some vertex in $S_A$ (the other case being symmetric). Note that it must be the case that $m \gg n^{1/2} \ge n^{1/10}$, say, since we know that $mn \gg  e(H) \gg n^{3/2}$.

		We may partition $F$ as $F = F' \cup F''$, where $F'$ consists of all the forward edges in $\tau$ directed from $B \cap Y$ to $S_A$, and $F''$ consists of all those forward edges in $\tau$ directed from $S_B$ to $S_A$. We know from Claim~\ref{manyedges} that $|F''| \le |F| /2$, so we must have $|F'| \ge |F|/2 \ge e(H)/8$. 
		
		We now need the notion of a `balanced interval'. We fix a sufficiently large constant $C_1 = C_1(t) > 0$ to support what follows, and say that a sub-interval $W \subset Y$ is \emph{balanced} if
		\begin{enumerate}[label = {\bfseries A\arabic{enumi}}]
			\item\label{A1} the number of edges of $F'$ within $W$ is at least  $|F'|/2$,
			\item\label{A2} $1/50< |W\cap A|/|W\cap B| <50$, and
			\item\label{A3} either the initial segment $W^+_i\subset W$ of the first $i$ vertices in $W$ satisfies $|W^+_i\cap A| <50 |W^+_i\cap B|$ or the the terminal segment $W^-_i\subset W$ of all but the first $i$ vertices in $W$ satisfies $|W^-_i\cap B| <50 |W^-_i\cap A|$ for each $C_1 \le i\le |W| - C_1$.
		\end{enumerate}
		We first show that a balanced interval may always be found. Starting with the interval $Y$, we shall successively refine the interval under consideration into a `more structured' sub-interval, repeating this iteratively until we reach our goal. We start with $W_0 = Y$ and in each step $0 \le j \le 30\log n$, we do the following. If the interval $W_j$ is balanced, then we stop. If not, then we shall find a sub-interval $W_{j+1}\subset W_{j}$ with $|W_{j+1}|\le 9|W_{j}|/10$ that contains all but $C_2n$ edges of $F'$ within $W_j$, for some $C_2 = C_2(t) > 0$. We claim that such an iterative process must terminate in a balanced interval. Indeed, if the process does not terminate within the first $30\log n$ steps, then as we have lost at most $30C_2 n \log n$ edges of $F'$, the number of surviving edges from $F'$ is at least $|F'| - 30C_2n\log n \ge 3|F'|/4 \gg n^{3/2}$ since $|F'| \gg n^{3/2}$, while on the other hand, the number of surviving vertices is $O(1)$, which is clearly impossible.

		We now describe how to construct $W' = W_{j+1}$ from an unbalanced $W = W_{j}$ at some stage $0 \le j < 30 \log n$. We may assume inductively, as we saw earlier, that the number of edges of $F'$ with both endpoints in $W$ is at least $3|F'|/4 \gg n^{3/2}$, so in particular, we have $|W| \gg n^{3/4}$. Since $W$ is not balanced, it must violate one of~\ref{A2} or~\ref{A3}. We now describe how to construct $W'$ in the case where $W$ violates~\ref{A2} on account of $|W\cap B|>50|W\cap A|$, and then indicate the minor modifications needed to handle the other cases.

		Consider set $Z$ of the last $C_2$ vertices from $A$ in $W$ and suppose that $v_{k+1}$ is the first vertex in $Z$. We claim that we may take $W' = W^+_k$ to be the initial segment of those vertices preceding $v_{k+1}$ in $W$.

		To show that this choice of $W'$ works, we first claim that $|W' \cap B| < 4/5 |W \cap B|$. Indeed, if this is not the case, we may find a copy of $K_{r, t}$ in $H$ by arguing as follows. Given a vertex $ v = v_{i+1} \in Z$, since $\tau$ is an ordering that minimises the number of backward edges, we know from Proposition~\ref{minorder} that at least $1/2$ of the edges between $W^+_i$ and $v$ are directed into $v$. Since
		\[|W' \cap B| \ge 4|W \cap B|/5 \ge(4/5)(50/51) |W| \ge 3 |W|/4,\] the number of edges directed from $W' \cap B$ to $v$ is at least \[|W' \cap B| - |W^+_i|/2 \ge 3|W|/4 - |W|/2 = |W|/4.\]  It then follows that the number of edges directed from $W' \cap B$ to $Z$, all necessarily edges in $H$, is at least $|W' \cap B| |Z|/4$. Therefore, as $|W' \cap B| \ge 3|W|/4 \gg n^{3/4}$ and $|Z| = C_2$, then provided $C_2$ is suitably large, we conclude from Proposition~\ref{drc2} that there is a copy of $K_{r, t}$ in $H$ between $W' \cap B$ and $Z$, a contradiction.

		If $W$ violates~\ref{A2} on account of $|W\cap A|>50|W\cap B|$, then we analogously construct $W'$ by considering the first $C_2$ vertices from $B$ in $W$. Finally, if $W$ violates~\ref{A3} for some $C_1 \le i \le |W|-C_1$, then we apply the above argument to both $W^+_i$ and $W^-_i$, looking at the first $C_2/ 2$ vertices from $B$ in the former interval, and the last $C_2 / 2$ vertices from $A$ in the latter interval.

		To finish the proof of the claim, we shall show that the existence of a balanced interval $J \subset Y$ yields too many backward edges in $\tau$. We need a little notation: for a partition	of $J= J_1 \cup J_2$ into an initial segment $J_1$ and a terminal segment $J_2$, we decompose the subset of at least $|F'|/2$ edges of $F'$ within $J$ into three parts as $F_1 \cup F_2 \cup F_{12}$, were $F_i \subset F'$ is the set of such edges entirely within $J_i$ for $i = 1, 2$ and $F_{12} \subset F'$ is the set of such edges directed from $J_1$ to $J_2$.
		
		Now, fix $J_1$ to be the smallest initial segment of $J$ for which $|F_1| \ge |F'|/4$ and set $J_2 = J \setminus J_1$. Since $|F'| \gg n^{3/2} \gg n$, it follows that $|F'| /4 \le |F_1| \le |F'|/3$. We cannot have $|F_{12}| \ge |F'|/10$, since this would imply that there are too many edges directed from $J_1 \cap B$ to $J_2 \cap A$, contradicting Claim~\ref{manyedges}. Thus, we may assume that $|F_1|, |F_2| \ge |F'|/4$. It must be the case that $|J_1\cap B| \le |J\cap B|/1000$, for if not, then we would have
		\[|J_1\cap B|| J_2 \cap A|\ge  |J_2\cap B||J_2\cap A|/1000 \ge |F_2|/1000 \ge |F'|/4000 \ge 2e(H)/10^5,\]
		which when combined with Claim~\ref{manyedges} promising us that the number of edges from $J_1 \cap B$ to $J_2 \cap A$ is at most $e(H)/10^5$, yields at least $e(H)/10^5$ backward edges in $\tau$, a contradiction. The same reasoning also leads us to conclude that $|J_2\cap A|\le |J\cap A|/1000$. These two assertions taken together contradict the fact that $J$, being balanced, satisfies~\ref{A3}; the claim now follows.
	\end{proof}

	We have shown that the tournament $T$ we constructed has both the properties we desire, completing the proof of the proposition.
\end{proof}

Let us introduce some conventions that we adopt in the sequel. In what follows, given a tournament $T$, we shall work exclusively with an ordering $\sigma$ of its vertex set minimising the number of backward edges, so all subsequent references to forward or backward edges, intervals of vertices, lengths of edges, etc., will be with respect to this ordering. Given two disjoint sets of vertices $A < B$ of a tournament $T$, we define $d(A,B)$ to be the distance between the largest vertex of $A$ and the smallest vertex of $B$, and we abuse notation slightly and define $d(A)$ to be the distance between the smallest vertex and the largest vertex of $A$. With this language in place, we are now ready to prove our second main result.

\begin{proof}[Proof of Theorem~\ref{tourthm}]   

	We start by fixing $t\ge 3$ and setting $r = r(t) = \lceil t/2\rceil \ge 2$, and we take $C = C(t) > 1$ to be large enough to support the argument that follows. 
	
	Our argument will be by contradiction. Starting with a tournament on $N_0$ vertices that is at least $CN_0^{-1/r}$-far from being transitive with no copy of $\DD_t$, we repeatedly Lemma~\ref{longlemma} until we reach a subtournament $T$ on $N$ vertices which has an ordering $\sigma$ of its vertex set minimising the number of backward edges in which at least $CN^{2-1/r}$ backward edges have length least $N/50$. Furthermore, we may of course suppose that $N$ is large enough to support the arguments that follow. 
	
	We justify the above claim as follows. After $k$ unsuccessful applications of Lemma~\ref{longlemma}, we are left with a tournament on $n = N_0/20^{k}$ vertices, whose distance from being transitive is at least $6^k C n^{-1/r}$, so the number of backward edges in any ordering of such a tournament is at least 
	\[6^k \cdot CN_0^{-1/r} \cdot n^2 = (6/20^{1/r})^k \cdot C n^{2-1/r}\] 
Now, we know $r \ge 2$, so $6/20^{1/r} \ge 6/\sqrt{20} > 1$, so it follows that we must have a successful application of Lemma~\ref{longlemma} before $n$ becomes too small, since the number of backward edges is both at least $Cn^{2-1/r}$ and at most $n^2/2$, and indeed, if we start with $C$ large enough, we may assume that we succeed at a stage where $n$ is sufficiently large.
	
	In what follows, we shall work with $T$, which is a tournament on $N$ vertices with no copy of $\DD_t$. Furthermore, we also fix $\sigma$, an ordering of $V(T)$ minimising the number of backward edges with respect to which we know that there are at least $C N^{2-1/r}$ backward edges of length at least $N/50$.
	
	First, we need an analogue of a lemma we used in the context of finding unavoidable colourings that allows us to deal with vertices of atypically large degree (with respect to the backward edges).
 
    \begin{claim}\label{tourdegrees}
    	For any $\eps > 0 $, there exist positive integers $C_1 = C_1(t, \eps) > 0$ and $C_2 = C_2(t, \eps) > 0$ such that the following holds. For any interval $I$ in $\sigma$ of at least $n \geq C_1$ vertices, the induced tournament $T[I]$ contains at most $C_2$ vertices that are incident with more than $\eps n$ backward edges in $\sigma$.
    \end{claim}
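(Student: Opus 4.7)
The plan is to argue by contradiction, paralleling Claim~\ref{claim:fewredvertices} in the directed setting. Suppose $T[I]$ contains a set $U$ of more than $C_2$ vertices, each incident to more than $\eps n$ backward edges of $\sigma$ inside $T[I]$; the goal is then to locate a copy of $\DD_t$ in $T$, contradicting our standing hypothesis. For each $v \in U$, split its backward edges into \emph{outgoing} ones (of the form $v \to u$ with $u < v$) and \emph{incoming} ones (of the form $u \to v$ with $u > v$); by pigeonhole, a set $S \subset U$ of size at least $C_2/2$ has at least $\eps n/2$ backward edges of a common type. Without loss of generality, say this common type is outgoing, the incoming case being symmetric.

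Consider the bipartite graph $H$ on sides $S$ and $I$ consisting of these outgoing back-edges; it has density at least $\eps/4$. Provided $C_1$ and $C_2$ are chosen suitably large, Proposition~\ref{drc2} yields a complete bipartite subgraph $K_{K, K}$ in $H$ with $K = K(t, \eps)$ as large as we like; let its sides be $A \subset S$ and $B \subset I$, so that $B$ precedes $A$ in $\sigma$ and every edge of $T$ between $A$ and $B$ points from $A$ to $B$. Taking $K \ge \TTT(t)$, Proposition~\ref{bipRam} applied inside $T[A]$ and $T[B]$ produces transitive subtournaments $V_1 \subset A$ and $V_2 \subset B$, each of size $t$, with $V_1 \to V_2$ by construction; these yield two of the three parts of the sought copy of $\DD_t$.

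The remaining step, and the main obstacle, is to produce a transitive $V_0 \subset V(T)$ of size $t$ with $V_2 \to V_0 \to V_1$, equivalently $V_0 \subset N^+(V_2) \cap N^-(V_1)$. For any individual pair $(v_1, v_2)$, Proposition~\ref{minorder} tells us that a typical vertex strictly between $v_2$ and $v_1$ in $\sigma$ is both an out-neighbour of $v_2$ and an in-neighbour of $v_1$; but a crude union bound over the $2t$ such constraints needed to ensure membership in the full intersection falls short. To sidestep this, I would strengthen the biclique-extraction step via an additional round of dependent random choice (Proposition~\ref{drc1}) applied to the forward in/out-neighbourhood structure of $A$ and $B$, thereby selecting $V_1 \subset A$ and $V_2 \subset B$ that are not only transitive subtournaments but also satisfy $|N^-(V_1) \cap N^+(V_2)| \ge \TTT(t)$. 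A final invocation of Proposition~\ref{bipRam} then extracts a transitive $V_0$ of size $t$ inside this intersection, producing a copy of $\DD_t$ in $T$ and delivering the contradiction.
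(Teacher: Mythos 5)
Your set-up (contradiction, pigeonhole onto a common orientation, biclique extraction via Proposition~\ref{drc2}) matches the paper's opening moves, but the argument has a genuine gap at exactly the step you flag as "the main obstacle," and the proposed fix does not close it.

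The issue is that you are trying to find $V_0$ inside $N^+(V_2)\cap N^-(V_1)$ with $V_1\subset A$ and $V_2\subset B$, where $B$ sits far to the left of $A$. Proposition~\ref{minorder} gives a cumulative density-$1/2$ bound on $N^+(v_2)\cap[v_2+1,j]$; it does \emph{not} say $N^+(v_2)$ is dense in a short sub-interval far to the right of $v_2$. In particular, $N^+(v_2)$ may entirely miss the region immediately to the left of $A$, which is precisely where $N^-(V_1)$ is forced to live. Already for singletons, the two density-$1/2$ bounds only give $|N^+(v_2)\cap N^-(v_1)\cap(v_2,v_1)|\ge 1$; intersecting $2t$ such constraints has no a priori lower bound. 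No round of dependent random choice applied to $A$ and $B$ separately can rescue this, because the problem is geometric: the dense regions for $V_2$'s out-neighbours and $V_1$'s in-neighbours need not overlap when $V_1$ and $V_2$ are far apart, and $B$ is found as a set of out-neighbours under \emph{long} backward edges, so the two are necessarily far apart.

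The paper circumvents this with two ingredients you omit. First, a \emph{concentration step}: pigeonhole the high-back-degree vertices into a set $S'$ with $d(S')\le\eps n/100$, so that a single intermediate interval $I'$ (of length $\eps n/10$, immediately to the left of $S'$) serves as a common reference interval for all of $A'\subset S'$ when applying Proposition~\ref{minorder}. The backward edges from $S'$ then reach past $I'$ into $I''$. Second, both $B^+\subset I''$ (common out-neighbours) and $B^-\subset I'$ (common in-neighbours) are extracted from the same small set $A'$ by dependent random choice — never as a joint common structure of two far-apart sets. Finally, instead of insisting on a forward biclique from $B^+$ to $B^-$, the paper uses a \emph{dichotomy}: either such a biclique exists and yields $\DD_t$, or (via Proposition~\ref{drc2}) most $B^+$--$B^-$ pairs are backward edges, forcing $T[I''\cup I']$ to be $\Omega(1)$-far from transitive and hence (by the Fox--Sudakov theorem) to contain $\DD_t$ anyway. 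Your proposal needs the first two of these ingredients to even get off the ground, and the third to finish.
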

    
    \begin{proof}
    	We argue by contradiction, always ensuring that the numbered constants $C_1, C_2, C_3, \dots$ in our argument are sufficiently large as a function of $t$. Suppose there exists a set $S\subset I$ consisting of $C_2$ vertices each sending out at least $\eps n$ backward edges in $\sigma$; the other case where these vertices receive many backward edges may be handled analogously. 
	
	First, by pigeonholing, we pass to a large subset $S'\subset S$ of order at least $C_3 = C_3(t) > 0$ with the property that $d(S')\leq \eps n/100$. Now, consider the subsegment $I'$ of $I$ consisting of exactly $\eps n/ 10$ vertices, immediately to the left of the first vertex of $S'$ in $\sigma$. Each vertex of $S'$ sends out at least $\eps n$ backward edges in $\sigma$, so each such vertex sends at least $\eps n/2$ backward edges to the left of $I'$ to the interval $I''$, where $I''$ is the subinterval of $I$ preceding $I'$ in $\sigma$. 
	
	Next, we may find a large set of at least $C_4 = C_4 (t) > 0$ vertices in $S'$ with a large common out-neighbourhood in $I''$; more precisely, by selecting a random subset of $S'$ and appealing to convexity, we find a set $A\subset S'$ of size $C_4$ with at least $(\eps/3)^{C_4} n$ out-neighbours in $I''$; call this set of common out-neighbours $B^{+}$. By Proposition~\ref{minorder}, we know that each vertex in $A$ receives at least $\eps n/30$ edges from $I'$. Again, by the same argument, we may pass to a large subset $A'\subset A$ of size at least $C_5 = \TTT(t)$ having at least $(\eps/3)^{C_5}n$ common in-neighbours in $I'$; call this set of common in-neighbours $B^{-}$. 
	
	We may now finish as follows. If there is a $K_{\TTT(t),\TTT(t)}$ directed from $B^{+}$ to $B^{-}$, then by passing to transitive tournaments within the partite classes of this copy and within $A'$, we may find a $\DD_t$ in $T$, which is a contradiction. Therefore, by evoking Proposition~\ref{drc2}, we conclude that there are at least $n^2 / C_6$ backward edges between $I'' \cup I'$ for some $C_6 = C_6(t) > 0$. Since $I'' \cup I'$ is an interval in $\sigma$, we conclude that $T[I'' \cup I']$ has distance at least $1/C_6$ from being transitive, and therefore contains a copy of $\DD_t$ provided $N'$ is sufficiently large, another contradiction.
	\end{proof}
    
    We apply the previous lemma with $\eps=1/(100t^2)$ to the entire tournament $T$, concluding that we may remove $O(1)$ vertices from $T$ and guarantee that in the resulting tournament $T'$ on $m$ vertices, no vertex is incident to more than $2\eps m$ backward edges in the ordering induced by $\sigma$ on $V(T')$, and that $T'$ has at least $Cm^{2-1/r}$ backward edges of length at least $m/20$ with respect to $\sigma$. 
    
    In what follows, we work with $T'$ and the ordering induced by $\sigma$ on $V(T')$, though we abuse notation slightly and refer to this induced ordering as $\sigma$ as well. We call a backward edge of $T'$ \emph{good} if its length is at least $m/20$; of course, we know that $T'$ has at least $Cm^{2-1/r}$ good backward edges.

    \begin{claim}\label{noktt}
    	We may assume that $T'$ does not contain a copy of $K_{t,t}$ formed from good backward edges, where one partite class of this copy precedes the other in $\sigma$, and each of the partite classes of this copy forms a transitive subtournament.
    \end{claim}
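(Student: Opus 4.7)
The plan is to argue by contradiction: supposing $T'$ does contain such a $K_{t,t}$, with transitive partite classes $B < A$ of size $t$ and every cross-edge being a good backward edge oriented from $A$ to $B$, I will exhibit a copy of $\DD_t$ in $T'$, which contradicts our setup. The third transitive $t$-block needed to complete the cyclic triangle will be carved out of the interval $I \subset V(T')$ strictly between $B$ and $A$ in $\sigma$; since each cross-edge of the $K_{t,t}$ has length at least $m/20$, we already have $|I| \ge m/20 - 1$, which is ample.

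The key step is to locate a large subset $J \subset I$ with $B \to J$ and $J \to A$. For this, recall that Claim~\ref{tourdegrees}, applied earlier with $\eps = 1/(100t^2)$, ensures that every vertex of $T'$ is incident to at most $2\eps m$ backward edges in $\sigma$. For a fixed $v \in B$, every vertex of $I$ lies strictly after $v$ in $\sigma$, so an edge between $v$ and some $w \in I$ can fail to be the forward edge $v \to w$ only if it is a backward edge incident to $v$; hence at most $2\eps m$ vertices of $I$ fail to be out-neighbours of $v$. The symmetric argument handles in-neighbours of vertices in $A$, and a union bound over the $2t$ vertices of $A \cup B$ yields $|J| \ge |I| - 4t \eps m \ge m/20 - 1 - m/(25t) \ge m/30$ provided $m$ is large, which is certainly the case here.

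To finish, the Erd\H{o}s--Moser theorem (the transitive-subtournament part of Proposition~\ref{bipRam}) produces a transitive subtournament $C \subset J$ of order $t$, once we note that $|J| \ge m/30 \ge \TTT(t)$. Then $A$, $B$, $C$ are each transitive of order $t$, with edges oriented $A \to B$ by hypothesis and $B \to C$, $C \to A$ by the defining property of $J$; together, they form the desired copy of $\DD_t$ in $T'$, contradicting the fact that $T'$ (a subtournament of our initial counterexample) contains no such copy.

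There is no serious obstacle here: the hard preparatory work was done in Claim~\ref{tourdegrees}, which gave us uniform control over backward-edge degrees. The only conceptual point worth flagging is why we needed to insist on \emph{good} (i.e.\ long) backward edges in the $K_{t,t}$: this is precisely what guarantees that the interval $I$ sandwiched between $B$ and $A$ is large enough for the degree bound to leave a substantial set $J$, after which Erd\H{o}s--Moser trivially supplies the third transitive block.
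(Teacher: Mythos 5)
Your proof is correct and follows essentially the same route as the paper: both use the fact that every vertex of $T'$ is incident to at most $2\eps m$ backward edges to show that the interval of length at least $m/20$ sandwiched between the two partite classes contains a large subset in the common out-neighbourhood of one class and the common in-neighbourhood of the other, after which Erd\H{o}s--Moser supplies the third transitive block of $\DD_t$. The only differences from the paper are cosmetic (the labelling of the classes and the exact form of the union bound).
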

    \begin{proof}
	Suppose that such a copy exists, say with partite classes $A$ and $B$ with $A<B$ where both $T[A]$ and $T[B]$ are transitive. We know there are at least $m/20$ vertices between the last element of $A$ and the first of $B$; call this intervening interval $P$. Since we removed all vertices incident to many backward edges in passing to $T'$, we know that $A$ has at least $|P|-2tm/(100t^2)  \ge 2|P|/3$ common out-neighbours in $P$, and that $B$ similarly has at least $2|P|/3$ common in-neighbours in $P$. Provided $m$ is large enough, we can then find a set $S \subset P$ of $\TTT(t)$ vertices in $P$ where all the edges are directed from $A$ to $S$ and from $S$ to $B$. Passing to a transitive subtournament inside $S$, we find a copy of $\DD_t$ in $T'$, which is a contradiction. 
    \end{proof}
    
    Now, we partition $V(T')$ into $100$ intervals of size $m/100$, and observe that at least a $1/10^4$ fraction of the good backward edges of $T'$ lie between two of these intervals; call these intervals $I$ and $J$ with $I < J$, and note that we necessarily have $d(I,J)\geq m/30$. To summarise, we now have two intervals $I< J$ of order $m/100$ for which there exists at least $C'm^{2-1/r}$ good backward edges directed from $J$ and $I$, for some large $C' = C'(t) > 0$.
    
    Next, we shall find two disjoint sets of edges from the good backward edges between $I$ and $J$ in such a way that every pair of edges across these two sets interlace nicely. To do so, we need to prepare $I$ and $J$ appropriately. We know that the number of good backward edges between $I$ and $J$ is at least $C'(|I|+|J|)^{2-1/r}$, so we pass to subintervals $I' \subset I$ and  $J' \subset J$ chosen such that the number of good backward edges from $J'$ to $I'$ is of the form $K(|I'|+|J'|)^{2-1/r}$ with $K \ge C'$ maximal.
   
    Note that the sizes of $I'$ and $J'$ are comparable; indeed, we must have $|J'|/3 <|I'|<3|J'|$. Suppose this does not hold, and assume $|J'|\geq |I'|=q$. Writing $|J'|=p\cdot q+s$, we may partition $J'$ into $p$ consecutive intervals of size $q$ and one interval of size $s\leq q$. By the maximality of $K$, the number of good backward edges between $I'$ and any interval in this decomposition of $J'$ is at most $K(2q)^{2-1/r}$, so the total number of good backward edges from $J'$ to $I'$ is at most $(p+1)\cdot K(2q)^{2-1/r}< K((p+1)q)^{2-1/r}<K(|I'|+|J'|)^{2-1/r}$, provided $p\geq 3$ and $r\geq 2$. 
    
    We now find an appropriate collection of interlacing edges between $I'$ and $J'$ using the following density increment argument.
    \begin{claim}\label{densityincr}
    	For every $\alpha\in\ [0,1/2]$ and $\eps>0$, there exists $ C_7 = C_7 (\alpha, \eps) >0$ such that the following holds. Fix a tournament $T$ and an ordering of its vertices. For any two intervals $I < J$ of vertices with $|I|+|J|=n$ for which there are $L\geq C_7 n^{2-\alpha}$ backward edges from $J$ to $I$ the following holds: either there exists a partition of $I$ into two intervals $I=I_1 \cup I_2$ and a partition of $J=J_1 \cup J_2$ into two intervals with $I_1<I_2$ and $J_1<J_2$ such that the number of backward edges from $J_1$ and $I_1$ and from $J_2$ and $I_2$ is at least $\eps L$, or there exist two intervals $I'\subset I$ and $J'\subset J$ with $|I'|+|J'|\leq n/2$ where the number of backward edges from $J'$ and $I'$ is at least $(1/2-3\eps)L$.
    \end{claim}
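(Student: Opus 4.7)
The plan is to prove the claim by establishing its contrapositive: assuming the first conclusion (call it case~(a)) fails, I exhibit intervals witnessing the second conclusion (case~(b)). Without loss of generality, I assume $|I| \le |J|$ (otherwise reverse the ordering $\sigma$ and apply the same argument). For a partition of $I$ at position $p$ into $I_1 = I[1, p]$ and $I_2 = I[p+1, |I|]$, and of $J$ at position $q$ into $J_1 = J[1, q]$ and $J_2 = J[q+1, |J|]$, write $A(p,q)$, $B(p,q)$, $C(p,q)$, $D(p,q)$ for the number of backward edges from $J_1$ to $I_1$, from $J_1$ to $I_2$, from $J_2$ to $I_1$, and from $J_2$ to $I_2$ respectively, so that $A+B+C+D = L$. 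I choose $C_7 = C_7(\alpha, \eps)$ large enough so that $L \ge C_7 n^{2-\alpha}$ implies $n < \eps L$; this is possible since $L/n \ge C_7 n^{1-\alpha}$ tends to infinity as $n \to \infty$ (for $\alpha \le 1/2$), and ensures that the contribution of any single vertex to these quantities is negligible relative to $\eps L$.

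The central device is a \emph{size-balanced diagonal sweep}. Setting $q_0 = \lfloor (|J|-|I|)/2 \rfloor$, I consider the sequence of partitions $(p(t), q(t)) = (t,\, t + q_0)$ for $t = 0, 1, \ldots, |I|$; at each of these, both the $B$-box $I_2 \times J_1$ and the $C$-box $I_1 \times J_2$ have combined size equal to $n/2$, up to a rounding error of at most one. Incrementing $p$ and $q$ by one changes each of $A$ and $D$ by at most $|I| + |J| = n$ per step. I let $t^*$ be the smallest $t \in \{1, \ldots, |I|\}$ with $A(p(t), q(t)) \ge \eps L$, or take $t^* = |I|+1$ if no such $t$ exists. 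In the latter case, at the terminal partition $(|I|, q_0 + |I|)$ we have $A < \eps L$ and $D = 0$ (as $I_2 = \emptyset$), so $B + C > (1 - \eps) L$ automatically. Otherwise, case~(a) failing at $(p(t^*), q(t^*))$ combined with $A \ge \eps L$ there forces $D(t^*) < \eps L$; then at the immediately preceding sweep partition $(p(t^*-1), q(t^*-1))$ I have $A < \eps L$ (by minimality of $t^*$) together with $D \le D(t^*) + n < 2\eps L$.

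In either case, I have located a sweep partition with $A + D < 3\eps L$, so $B + C > (1 - 3\eps) L$. Taking whichever of $B$ or $C$ is larger, say $B$, I obtain at least $(1 - 3\eps) L / 2 \ge (1/2 - 3\eps) L$ backward edges contained in $I_2 \times J_1$, which has combined size $n/2$; this gives the sub-intervals $I' = I_2$ and $J' = J_1$ required for case~(b). The main delicate point is the choice of sweep: naively sweeping to balance edge counts (say at the median of $\mu_I$ and $\mu_J$) can leave one side of the resulting partition much longer than the other, so I instead sweep along a line where \emph{both} candidate boxes have size $\approx n/2$ simultaneously and rely on case~(a) failing to force $A$ and $D$ to both be small. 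The integer rounding of $q_0$ introduces only a negligible off-by-one in box sizes, which is absorbed by a small strengthening of $C_7$.
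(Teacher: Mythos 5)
Your proof is correct and takes a genuinely different route from the paper's. The paper first cuts $I$ at the \emph{edge median} (the smallest initial segment of $I$ incident to $L/2$ backward edges), then cuts $J$ at a balance point making the $J_1$-to-$I_1$ count $A$ and the $J_2$-to-$I_2$ count $D$ approximately equal; in the non-(a) case both $B$ and $C$ exceed $(1/2-3\eps)L$, and the pair of smaller combined size, which is automatically at most $n/2$, furnishes case~(b). You instead sweep along a diagonal chosen so that \emph{both} cross-boxes have size close to $n/2$ at every step, and use the failure of case~(a) to locate a sweep position where $A+D<3\eps L$, then take the denser of $B,C$. The trade-off is instructive: the paper's cuts may leave the cross-boxes of very unequal sizes but shows both are dense (so the smaller may be selected and no rounding issue arises), whereas your cuts are size-balanced but control only $\max(B,C)$. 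One small imprecision at the end: you wave away the rounding in $q_0$ by appealing to a small strengthening of $C_7$, but the issue is that the chosen box may have size $(n+1)/2$ when $n$ is odd, which no choice of $C_7$ by itself repairs. The correct absorption is the slack you already built in: the denser box carries at least $(1/2-\tfrac{3}{2}\eps)L$ edges, so deleting a single vertex to bring its size below $n/2$ costs at most $n<\eps L$ edges and still leaves at least $(1/2-\tfrac{5}{2}\eps)L\ge(1/2-3\eps)L$; the choice $C_7\ge 1/\eps$ you already made is enough.
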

    
    \begin{proof} We argue as follows. Consider the smallest initial segment of $I$, say $I_1$, such that the number of backward edges with an endpoint in $I_1$ is at least $L/2$. Since $L=\omega(n)$, the set $I\setminus I_1=I_2$ must also be incident with at least $L/2 + o(L)$ backward edges. 
    
    Now, enumerate $J = \{1, 2,\dots, |J|\}$ and for each $j\in J$, let us define $b(j)$ to be the difference between the number of backward edges between $I_1$ and $\{1,\ldots, j-1\}$ and the number of backward edges between $I_2$ and $\{j,\ldots, |J|\}$. We know that $b(1) = -L/2 + o(L)$ and $b(|J|) = L/2 + o(L)$, and since $b(t+1)=b(t)+o(L)$, there exists a vertex $p\in J$ such that $b(p)=o(L)$; accordingly, let $J_1 = \{1,\ldots, p-1\}$ and $J_2 = \{p,\ldots, |J|\}$.
    
    Suppose that the number of backward edges between $I_1$ and $J_1$ is at least $2\eps L$. Then there must also exist $2\eps L + o(L) \ge \eps L$ backward edges between $I_2$ and $J_2$, in which case, we are done. If the above assumption does not hold, then the number of backward edges between $I_1$ and $J_2$ and the number of backward edges between $I_2$ and $J_1$ are both at least $(1/2-3\eps)L$; we find $I'$ and $J'$ by now taking the pair with the smaller total size, proving the claim.
    \end{proof}
    
We now apply the above claim to $I'$ and $J'$ to find many interlacing good backward edges between them. Indeed, we apply the previous claim (with $\alpha=1/r$ and $\eps=1/100$) to the backward edges between $I'$ and $J'$. If the latter conclusion of the claim holds, then we find two intervals $I''\subset I'$ and $J''\subset J'$ such that the number of backward edges between $I''$ and $J''$ is at least $ (1/2-1/50)K(|I'|+|J'|)^{2-1/r}$. However $|I''|+|J''| \leq (|I'|+|J'|)/2$, from which it follows that 
\[(1/2-1/50)K(|I'|+|J'|)^{2-1/r} \geq (3 K / 2) (|I''|+|J''|)^{2-1/r},\] which contradicts the maximality of $K$. So the former conclusion of the claim must hold, which means that we may find four intervals $I_1<I_2<J_1<J_2$ where $|I_1|+ |I_2|=|I'|$ and $|J_1|+|J_2| = |J'|$ such that the number of good backward edges between $I_1$ and $J_1$ is at least $(K/400)(|I_1|+|J_1|)^{2-1/r}$ and the number of good backward edges between $I_2$ and $J_2$ is at least  $(K/400)(|I_2|+|J_2|)^{2-2/t}$. Moreover, as we already know, the distance between $I_2$ and $J_1$ is at least $m/30$. 

In the light of the above discussion, let us select a collection of four intervals $I_1<I_2<J_1<J_2$ with $d(I_2,J_1)\geq m/30$ such that the numbers of good backward edges between $I_1$ and $J_1$ and between $I_2$ and $J_2$ are respectively at least $K_1(|I_1|+|J_1|)^{2-1/r}$ and $K_2(|I_2|+|J_2|)^{2-1/r}$ with $K_1 + K_2>0$ as large as possible. In what follows, we argue under the assumption that $|I_2|\geq |I_1|$ and $|J_2|\geq |J_1|$; the three other cases may be handled analogously.

    Just as we have some separation between $I_2$ and $J_1$, it will also be convenient to introduce some separation between the other intervals. For this purpose, we shall subdivide $I_2$ and $J_2$ into two new intervals. Indeed, let $I_2=I_3\cup I_4$ where $I_3$ is the initial segment of $I_2$ order $|I_2|/100$, and similarly let $J_2=J_3\cup J_4$ where $J_3$ is the initial segment of $J_2$ of order $|J_2|/100$. Let $|J_2|=p$ and $|I_2|=p'$, and assume $p'\leq p$. Using the same line of reasoning that established that $|I'|$ and $|J'|$ were comparable, we may suppose that $p/3\leq p' \leq p$. Finally, using the maximality of $K_2$ once $I_1$ and $J_1$ (and hence $K_1$) are fixed, we see that the number of good backward edges between $I_4$ and $J_4$ is at least 
    \[K_2 (p+p')^{2-1/r}- K_2(p'+p/100)^{2-1/r}-K_2(p+p'/100)^{2-1/r},\]
which may be checked to be at least $(K_2/100)(|I_4|+|J_4|)^{2-1/r}$. 
     
     In the rest of the argument, we work exclusively with these four intervals $I_1<J_1<I_4<J_4$. Let us summarise what we need about these intervals below.    
    \begin{enumerate}
    	\item $I_1<I_4<J_1<J_4$.
    	\item $d(I_4, J_1) \geq m/30$.
    	\item $d(I_1,I_4)\geq (|I_1|+|I_4|)/200$.
    	\item $d(J_1,J_4)\geq (|J_1|+|J_4|)/200$.
    	\item The number of good backward edges between $I_1$ and $J_1$ is at least $K'(|I_1|+|J_1|)^{2-1/r}$, for some suitably large constant $K'>0$.
    	\item The number of good backward edges between $I_4$ and $J_4$ is at least $K'(|I_4|+|J_4|)^{2-1/r}$, for some suitably large constant $K'>0$.
    \end{enumerate}
    We need to rule out the possibility of finding a copy of $K_{t,t}$ in the backward edges between either $I_1$ and $I_4$ or $J_1$ and $J_4$. We may accomplish this as before. We first observe that $|I_i|,|J_i|$ with $i\in \{1,4\}$ must all be sufficiently large in order to satisfy these properties since $K'$ may be assumed to be sufficiently large. Moreover, by mimicking the argument used to prove Claim~\ref{tourdegrees} and using the fact that we have only deleted $O(1)$ vertices so far, we may delete $O(1)$ further vertices from $I_1\cup I_4$ so that none of remaining vertices are incident to more than than $\eps |V(T^*)|$ backward edges in $\sigma$, where $T^*$ is the induced tournament on the interval spanning $I_1$ to $I_4$ and $\eps=1/(100t^2)$. We analogously remove the $O(1)$ vertices incident to many backward edges from $J_1 \cup J_4$ as well. By mimicking the proof of Claim~\ref{noktt}, we may now assume that there are no copies of $K_{t,t}$ in the backward edges between either $I_1$ and $I_4$ or $J_1$ and $J_4$

    Now, our plan is to find a $K_{r,t}$ using good backward edges, where the smaller partite class of size $r$ lies in $I_1$ and the larger partite class of $t$ vertices lies in $J_1$, and to similarly a copy of $K_{r,t}$ with the smaller partite class in $J_4$ and the larger partite class in $I_4$. This can obviously be done by Proposition~\ref{drc1} by what we know about the number of good backward edges between these sets. What is crucial however, is to find two such copies where all the other edges between the partite classes are forward edges directed from left to right; we will accomplish this with the help of Claim~\ref{noktt}.
    
    We argue as follows using dependant random choice. For suitably large constants $C_8 = C_8 (t) > 0$, $C_9 = C_9 (t) > 0$ and $C_{10} = C_{10} (t) > 0$ with $C_{10}$ sufficiently larger than $C_9$ and $C_9$ sufficiently larger than $C_8$, we may appeal to Proposition~\ref{drc1} to find two sets $A\subset I_1$ and $B\subset J_4$ such that
    \begin{enumerate}
    	\item $|A| = C_8,|B| = C_9$;
		\item the induced tournaments on $A$ and $B$ are transitive,
		\item all the edges between $A$ and $B$ are directed  from $A$ to $B$,
    	\item every subset of size $r$ in $A$ has at least $C_{10}$ common in-neighbours in $J_1$, and
    	\item every subset of size $r$ in $B$ has at least $C_{10}$ common out-neighbours in $I_4$.
    \end{enumerate}

	The second and third points above need some elaboration. Having found two suitably large sets in $I_1$ and $J_1$ meeting the other conditions, we appeal to Ramsey's theorem for tournaments to pass to a suitably large transitive subset within each. Subsequently, we know that there is no copy of a $K_{t,t}$ in the backward edges between these transitive subsets by Claim~\ref{noktt}, so we now appeal to Ramsey's theorem for bipartite graphs to find a suitably large complete bipartite graph between these transitive subsets with all the edges directed forwards from left to right.

	We repeat the above process of `two-step cleaning' via Ramsey's theorem across all pairs of the common in-neighbours of $r$-sets in $A$ and the common out-neighbours of the $r$-sets in $B$. In other words, we may now assume, that for a suitably large $C_{11} = C_{11} (t)>0$, we have the following: every $r$-set $A' \subset A$ has a set of $C_{11}$ common in-neighbours $A''$ in $J_1$ and every $r$-set $B' \subset B$ has a set of $C_{11}$ common out-neighbours $B''$ in $I_4$ such that the induced tournaments on $A''$ and $B''$ are transitive, and all the edges between $B''$ and $A''$ are directed forwards from $B''$ to $A''$. 
	
	Next, since $|B|$ is much larger than $|A|$, we refine $B$ as follows. For each $r$-set $A' \subset A$, we consider it's set $A''$ of $C_{11}$ common in-neighbours in $J_1$. We know by assumption that there is no copy of $K_{t,t}$ in the backward edges between $A''$ and $B$, so by Ramsey's theorem, we may find a sufficiently large complete bipartite graph directed forwards from $A''$ to $B$. We iterate through the $r$-sets in $A$ and use this observation to conclude (by passing to appropriate subsets) that for a suitably large constant $C_{12} = C_{12}(t) >0$, we have $|B| = C_{12}$ and every $r$-set $A' \subset A$ has a set of $t$ common in-neighbours $A''$ in $J_1$ such that all edges between $A''$ and $B$ are directed from $A''$ to $B$.
	
	We are now done since we may find a copy of $\DD_t$ as follows. We select any subset $B' \subset B$ of size $r$. Consider it's set of $B''$ of $C_{11}$ common out-neighbours in $I_4$. Since there is no copy of $K_{t,t}$ in the edges directed from $I_4$ and $I_1$, we may find, by Ramsey's theorem, a complete bipartite graph directed forwards between a set $X \subset B''$ of size $t$ and a set $A' \subset A$ of size $r$. Let $Y$ be the set of $t$ common in-neighbours of $A'$ in $J_1$. Then it is easy to see that the transitive classes $A' \cup B'$, $X$ and $Y$ together induce a copy of $\DD_t$. This completes the proof by contradiction.
\end{proof}
    
\section{Exceptional patterns}\label{sec:excep}
The results established earlier in the paper for unavoidable patterns of order at least three might suggest at first glance that the Ramsey numbers for patterns of order two should satisfy $\CC(2,\delta) = \Theta ((1/\delta)^2)$ and $\TT(2,\delta) = \Theta ((1/\delta))$. However, this is not the case; patterns of order two exhibit some degenerate behaviour, as we shall now demonstrate.

First, we deal with unavoidable $2$-colourings. While it is not hard to prove a much more precise result, we settle for the following.
\begin{proposition}
$\CC(2,\delta) = \Theta (1/\delta)$.
\end{proposition}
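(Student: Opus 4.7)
I would prove the upper bound $\CC(2,\delta) = O(1/\delta)$ and the matching lower bound $\CC(2,\delta) = \Omega(1/\delta)$ separately; both proofs are structural and elementary since all forbidden patterns live on only four vertices.

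For the lower bound, I would colour the edges of $K_n$ by taking the red graph to be a star $K_{1,n-1}$ centred at some vertex $v^*$, with blue being the rest. Any four vertices either include $v^*$ --- in which case the red subgraph is $K_{1,3}$ --- or exclude it --- in which case the red subgraph is empty. Neither equals $K_2$, $2K_2$, $K_4 - e$, or $C_4$, so no unavoidable $2$-pattern appears. Since the red class has $n - 1$ edges, this coloring is $\delta$-far provided $n \le 1/\delta + O(1)$, giving $\CC(2,\delta) > 1/\delta - O(1)$.

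For the upper bound, suppose a coloring of $K_n$ has no unavoidable $2$-pattern; I would show the smaller colour class has at most $n - 1$ edges, so that $\delta$-farness gives $\delta n^2 \le n - 1$ and forces $n \le 1/\delta + O(1)$. The main step is to unpack the forbidden-pattern condition: for every red edge $uv$, any two vertices $x, y$ each blue-adjacent to both $u$ and $v$ would force the $4$-set $\{u,v,x,y\}$ to induce either $K_2$ (if $xy$ is blue) or $2K_2$ (if $xy$ is red) in the red graph --- either way unavoidable. Hence the common blue neighbourhood of $u$ and $v$ has at most one element, equivalently $\Deg_R(u) + \Deg_R(v) \ge n - 1$ for every red edge, and symmetrically for blue. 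Summing over red edges gives $\sum_v \Deg_R(v)^2 \ge (n-1)|R|$, producing a vertex $v^*$ of red-degree at least $(n-1)/2$.

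I would then analyse the structure around $v^*$ by writing $N = N_R(v^*)$ and $M = N_B(v^*)$. Applying the above condition to the edges $v^*m$ (blue, for $m \in M$) and $v^*u$ (red, for $u \in N$) shows that every $m \in M$ has at most one red neighbour in $N$ and every $u \in N$ has at most one blue neighbour in $M$. Since every edge between $N$ and $M$ is either red or blue, the total of $|N||M|$ such edges is at most $|N| + |M|$, forcing $|M| \le 1$ for $n$ large enough. Thus $v^*$ is red-adjacent to all but at most one vertex. Using the $4$-subset constraints applied to sets containing $v^*$, one further deduces that $R \setminus \{v^*\}$ must be a disjoint union of cliques with at most one clique of size $\ge 2$ plus at most one isolated vertex. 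This pins $R$ down to one of a short list --- essentially $\varnothing$, $K_n$, $K_{1,n-1}$, $K_{n-1} + K_1$, or two near-star variants --- each of which satisfies $\min(|R|,|B|) \le n - 1$, as required.

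The main technical obstacle is the final structural classification: once $v^*$ is shown to be nearly universal in red, one has to carefully verify that the only red graphs consistent with all four $4$-vertex forbidden patterns fall into the short list above. The case analysis is tractable because the constraints are local and the list of possibilities is short, but some care is needed to handle the \emph{near}-universal (rather than exactly universal) case of $v^*$.
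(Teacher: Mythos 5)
Your lower bound construction (red star $K_{1,n-1}$) is exactly the one in the paper, and your degree-sum argument to find a high-red-degree vertex $v^*$ followed by the $|N||M|\le|N|+|M|$ bound to show $v^*$ is nearly red-universal is correct as stated. The overall upper-bound route, however, is genuinely different from the paper's: the paper simply reuses Claim~\ref{claim:fewredvertices} (the degree-cleaning lemma from the proof of Theorem~\ref{colourthm}) with $\eps=1/6$ to strip off $O(1)$ vertices so that every surviving vertex has blue degree at least $2n/3$; then any surviving red edge $xy$ has $|N_B(x)\cap N_B(y)|\ge n/3\ge 2$, and two common blue neighbours $z,w$ immediately give an unavoidable $2$-colouring on $\{x,y,z,w\}$ regardless of the colour of $zw$. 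That argument is about four lines long and avoids any structural classification. Your alternative is self-contained and elementary (no appeal to the Ramsey/dependent-random-choice machinery hidden inside Claim~\ref{claim:fewredvertices}), but it pays for this with the classification step, which is both the heaviest part of the proof and the part where your sketch is slightly off: the constraint you get from $4$-sets \emph{containing} $v^*$ is only that $R[N]$ is $P_3$-free, i.e.\ a disjoint union of cliques; the further restriction (at most one nontrivial component and, given a nontrivial component, at most one isolated vertex) comes from $4$-sets \emph{avoiding} $v^*$, where $2K_2$ and $K_2\cup 2K_1$ must be excluded as the red class. Also note that when $R[N]$ has no component of size $\ge 2$ it can have arbitrarily many isolated vertices, so the phrasing ``at most one clique of size $\ge 2$ plus at most one isolated vertex'' should be read as a conditional. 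With those corrections, and after working through the residual $|M|=1$ case, your classification does land on a short list with $\min(|R|,|B|)\le n-1$, so the approach is sound; it is simply less economical than the paper's reuse of the existing lemma.
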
 
\begin{proof}
By taking a colouring of $K_n$ where all edges are coloured blue except the edges incident with some vertex (which are coloured red), we obtain a colouring which does not contain a $K_4$ inducing an unavoidable $2$-colouring. Clearly, both colours appear on at least $n-1$ edges. 

Next, we shall show that there exists an absolute constant $C>0$ such that any colouring of $K_n$ where both colours appear on at least $Cn$ edges contains a $K_4$ inducing an unavoidable $2$-colouring. 

Suppose $G=K_n$ has a colouring where both colours appear at least $Cn$ times. Now, following the proof of Theorem~\ref{colourthm}, from Claim~\ref{claim:fewredvertices} (with $\varepsilon=1/6$), we are guaranteed that there is a set $S$ of at most $C_1$ vertices such that in $V(G)\setminus S$ every vertex is incident with at least $2n/3$ blue edges. Assuming $C>C_1$, we deduce $G\setminus S$ must span a red edge $xy$. Using the fact that the blue neighbourhoods of $x$ and $y$ must intersect in at least two vertices, we obtain a $K_4$ inducing an unavoidable $2$-colouring.
\end{proof}

The case of unavoidable $2$-tournaments is somewhat harder, and we are unfortunately unable to determine the correct rate of growth of $\TT(2,\delta)$. Nonetheless, we are able to show the following. 
\begin{proposition}
$ \log(1/\delta) / \delta \ll \TT(2,\delta) \ll (\log(1/\delta))^2 / \delta$.
\end{proposition}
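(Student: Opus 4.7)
The proposition has two parts, and my plan addresses each separately.

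For the \emph{lower bound} $\TT(2,\delta) \gg \log(1/\delta)/\delta$, I aim to exhibit a tournament on $n \approx \log(1/\delta)/\delta$ vertices that is $\delta$-far from transitive and contains no $\DD_2$. My plan is to use the bipartite template of Proposition~\ref{tourtight}: fix a natural order on $A \sqcup B$ (each of size $n/2$, with $A$ preceding $B$) and reverse the edges corresponding to a bipartite graph $H \subseteq A \times B$ with vertex degrees $\Theta(\log(1/\delta))$. This yields approximately $n \log(1/\delta)$ backward edges, matching $\delta \sim \log(1/\delta)/n$. Since the argument in Proposition~\ref{tourtight} specialised to $t = 2$ would only force $H$ to be $K_{1,2}$-free (i.e.\ a matching) to kill non-interleaved embeddings of $\DD_2$, giving only a linear lower bound, the real challenge is to rule out also the \emph{interleaved} embeddings, in which one class $V_1$ straddles $A$ and $B$. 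These configurations are avoided provided that for every pair $(x_A, x_B) \in A \times B$, the induced bipartite graph $H[N(x_B), N(x_A)]$ contains no independent \enquote{$2$-versus-$2$} pair; a suitably regular random bipartite graph with $\log(1/\delta)$-regular degrees, combined with a vertex-deletion argument to kill the remaining $\DD_2$'s, should realize such an $H$.

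For the \emph{upper bound} $\TT(2,\delta) \ll (\log(1/\delta))^2/\delta$, let $T$ be a tournament on $n = C(\log(1/\delta))^2/\delta$ vertices that is $\delta$-far from transitive, and fix a min-backward ordering $\sigma$. I would begin by applying an analogue of Claim~\ref{tourdegrees} with a small constant $\varepsilon>0$ to remove $O(1)$ high-backward-degree vertices, so that every remaining vertex has backward degree $\Theta(\delta n) = \Theta((\log(1/\delta))^2)$. Because $K_{2,2}$-free backward constructions of the type above rule out finding $\DD_2$ in its canonical form below the scale $n \sim 1/\delta^2$, I instead target the \emph{interleaved} form of $\DD_2$, in which the class $V_1$ contributes one vertex $x_A$ appearing before both $V_2$ and $V_3$ in $\sigma$ and one vertex $x_B$ appearing after both. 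Concretely, I seek $x_A$ with two backward in-neighbors $z_1, z_2$ and $x_B$ with two backward out-neighbors $y_1, y_2$ satisfying $x_A < y_i < z_j < x_B$ in $\sigma$ and such that none of the four pairs $(y_i, z_j)$ is a backward edge.

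The existence of such a configuration follows from a double count: the numbers of in-cherries $(x_A;z_1,z_2)$ and of out-cherries $(x_B;y_1,y_2)$ are each $\Omega(\delta^2 n^3)$ by convexity of $\binom{\cdot}{2}$, and pairing them yields $\Omega(\delta^4 n^6)$ candidate interleaved configurations. The number of candidates spoiled by an unwanted backward edge among the four pairs $(y_i, z_j)$ can be controlled via Proposition~\ref{minorder} (which bounds how densely backward edges can appear within a fixed interval) together with $O(\log(1/\delta))$ iterations of Lemma~\ref{longlemma} to pass to a subtournament where most backward edges are long; a positive fraction of candidates then survives provided $n \gg (\log(1/\delta))^2/\delta$, yielding the desired $\DD_2$. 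The main obstacle I anticipate is the simultaneous enforcement of the positional constraints and of the required forward-edge constraints among the interleaved vertices, without sacrificing too many candidates at each cleanup; the iterative refinement needed to do this cleanly is exactly what contributes the extra $\log(1/\delta)$ factor separating the lower and upper bounds.
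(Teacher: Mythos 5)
Both directions of your proposal contain genuine gaps, and both depart substantially from the paper's argument.

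For the lower bound, your plan is to re-use the bipartite template of Proposition~\ref{tourtight} with a random $\Theta(\log(1/\delta))$-regular bipartite graph $H$. This cannot work. With $n \approx \log(1/\delta)/\delta$, the edge-density of $H$ is $\Theta(\delta)$, so for a typical pair $(x_A,x_B)$ the graph $H[N(x_B),N(x_A)]$ on $\Theta(\log(1/\delta))\times\Theta(\log(1/\delta))$ vertices has $O(\delta(\log 1/\delta)^2)=o(1)$ expected edges --- it is essentially empty, and hence contains \emph{every} possible independent $2$-versus-$2$ pair. So far from being rare, interleaved copies of $\DD_2$ appear for $\Omega(n^2)$ pairs $(x_A,x_B)$, which no finite vertex-deletion can clean up. The condition you state (no independent $2$-vs-$2$ in $H[N(x_B),N(x_A)]$) forces, via K\H ov\'ari--S\'os--Tur\'an, that this small bipartite graph be nearly complete; propagated over all $(x_A,x_B)$ this forces $H$ to have a near-complete bipartite block, after which the min-backward ordering is no longer the $A<B$ ordering and the backward-edge count collapses. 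The paper instead uses a completely different, recursive construction: it builds $T$ on $2n+1$ vertices from two copies of a smaller $\DD_2$-free tournament on $A$ and $B$ (with $A\to B$) plus a single linking vertex $z$ with $B\to z\to A$; $\DD_2$-freeness is then structural (any $\DD_2$ must use $z$, but $\DD_2$ contains two disjoint cyclic triangles and only one can use $z$), and the $n\log n$ backward-edge count accrues by induction plus an extra $\Omega(n)$ per level. The bipartite template seems fundamentally incapable of getting past the linear barrier here.

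For the upper bound, you have identified the right target (the interleaved $\DD_2$ with $V_1 = \{x_A,x_B\}$ straddling two cherries), but your plan to reduce to ``long'' backward edges by $O(\log(1/\delta))$ iterations of Lemma~\ref{longlemma} is exactly the step that breaks at $t=2$. Each unsuccessful iteration of that lemma shrinks the vertex set by a factor of $20$ while multiplying the far-from-transitive parameter by only $6$; the quantity being amplified is $(6/20^{1/r})^k$, which grows only when $r\ge 2$. For $t=2$ one has $r=1$ and $6/20<1$, so iterating the lemma \emph{decreases} the ratio of backward edges to the threshold, and you never reach the structured situation the rest of your argument needs. The paper avoids Lemma~\ref{longlemma} entirely in this case and instead induces directly on $n$: it splits $\sigma$ into two halves, uses the inductive hypothesis to bound the backward edges inside each half by $C(n/2)(\log(n/2))^2$, and concludes that at least $Cn\log n/2$ backward edges cross between the halves. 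This is what the extra factor of $\log(1/\delta)$ is actually paying for. The subsequent interval-refinement (choose intervals maximising the normalised backward count, trim to enforce separation, rule out backward $K_{t,t}$ via Claim~\ref{noktt}-style arguments) then supplies exactly the positional control that your ``double count and hope a positive fraction survives'' sketch leaves unaddressed: without that separation there is no reason that $\max(y_i)<\min(z_j)$ holds for a positive fraction of pairings of cherries.
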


\begin{proof}
First, we dispose of the lower bound using an inductive construction. Let $T_n$ be a tournament on $n$ vertices which does not contain a copy of $\DD_2$ and which is $\log n/(5n)$-far from being transitive; such a tournament exists when $n=3$, as can be seen from considering a cyclic triangle. Given $T_n$, we shall construct a tournament $T$ on $2n+1$ vertices with the required properties. To do so, we take two vertex-disjoint copies of $T_n$, say on vertex sets $A$ and $B$, and direct all the edges from $A$ to $B$. Then, we add a new vertex $z$ where all the edges are directed from $B$ to $z$ and from $z$ to $A$. We observe that this tournament does not contain a $\DD_2$. Indeed, any such copy must contain $z$ as $\DD_2$ is strongly-connected. Furthermore, note that $\DD_2$ contains two vertex-disjoint copies of a cyclic triangle. Therefore, one such copy must use $z$, and the other must be entirely inside $A$ or entirely inside $B$, but this is impossible. 

Now, we claim that $T$ is $\log(2n+1)/(5(2n+1))$-far from being transitive. To see this, observe that any ordering of $A$ must span at least $n\log n/5$ backward edges, and the same holds for $B$, by the induction hypothesis. Hence, any ordering of $V(T)$ must have at least $2n\log n/5$ backward edges from $E(T[A]) \cup E(T[B])$. Finally, note that one of the following must hold. Either there are $n/2$ vertices in $A$ which precede every vertex in $B$, in which case, regardless of where $z$ is in the ordering, $z$ must be incident to least $n/2$ backward edges, or the first vertex of $B$ in the ordering must be incident to at least $n/2$ backward edges from $A$. In either case, any ordering of $V(T)$ spans at least $n/2+2n\log n/5\geq (2n+1)\log(2n+1)/5$ backward edges, as claimed.

Next, we deal with the upper bound, again proceeding by induction on the number of vertices. The argument closely resembles the proof of Theorem~\ref{tourthm}, so we restrict ourselves to sketching the main points of departure. Clearly, it suffices to handle the case where the number of vertices $n$ is sufficiently large, say, greater than a sufficiently large constant $C > 0$. Let $T$ be a tournament on $n$ vertices which is $C(\log n)^2/n$-far from being transitive, and let $\sigma$ be an ordering of $V(T)$ which minimises the number of backward edges.

Let $I_1$ and $I_2$ be the intervals corresponding to the first half and the last half of $\sigma$ of sizes $n/2$ each. By the induction hypothesis, both $I_1$ and $I_2$ induce at most $C(n/2)(\log(n/2))^{2}$ backward edges in $\sigma$. Therefore, the number of backward edges 
from $I_2$ to $I_1$ is at least $    Cn(\log n)^2-Cn(\log(n/2))^{2}\geq Cn\log(n) / 2$. 

In the same fashion as in the proof of Theorem~\ref{tourthm}, let $J_1<J_2$ be two disjoint intervals for which the number of backward edges between $J_1$ and $J_2$ is $K(|J_1|+|J_2|)\log(|J_1|+|J_2|)$, with $K\geq C/2$ as large as possible. We proceed assuming $|J_2|\geq |J_1|$, the other case being analogous. Let $X\subset J_2$ be the initial segment of $J_2$ of size $|J_2|/10$. By the maximality of $K$, we know that the number of backward edges between $J_1$ and $X$ is at most $K(|J_1|+|X|)\log(|J_1|+|X|)\leq (19K/20)(|J_1|+|J_2|)\log(|J_1|+|J_2|)$. Therefore, the number of backward edges between $J_1$ and $J_3 =J_2\setminus X$ is at least $(K/20)(|J_1|+|J_3|)\log(|J_1|+|J_3|)$, and additionally, we also know that $d(J_1,J_3)\geq (|J_1|+|J_3|)/200$. Now, let $Y_1<Y_2$ be two disjoint intervals satisfying both the above properties where the number of backward edges between $Y_1$ and $Y_2$ is as at least $K'(|Y_1|+|Y_2|)\log(|Y_1|+|Y_2|)$, with $K'\geq K/20$ as large as possible.

A minor modification, losing a logarithmic factor, of the proof of Claim~\ref{densityincr} shows that we can split $Y_1=P_1\cup Q_1$ and $Y_2=P_2\cup Q_2$, with $P_1 < Q_1 < P_2 < Q_2$ such that the number of backward edges between $P_1$ and $P_2$ and between $Q_1$ and $Q_2$ are respectively at least $K_1(|P_1|+|P_2|)$ and $K_2(|Q_1|+|Q_2|)$. As before, we shall suppose that we have picked $P_1<Q_1<P_2<Q_2$ as above for which $K_1 + K_2$ is as large as possible.

We shall sketch how to handle the case where $|Q_2|\geq |P_2|$, the other case being analogous. Moreover, in the argument that follows, we may assume without loss of generality that $|Q_1|\leq |Q_2|$. 

Delete the first $|Q_2|/20$ vertices of $Q_2$ and denote the remaining interval by $Q'_2$. As in the proof of Theorem~\ref{tourthm}, this allows us to separate $Q'_2$ from $P_2$, since we now have $d(P_2,Q'_2) \geq (|P_2|+|Q'_2|)/50$. We now need to show that there are still sufficiently many backward edges between $Q_1$ and $Q'_2$. Indeed, $Q_1$ and $Q_2$ span $K_2(|Q_1|+|Q_2|)$ backward edges between them, and the maximality of $K_2$ (with $K_1$ fixed) allows us to bound from above the number of backward edges between $Q_1$ and $Q_2 \setminus Q'_2$, from which we may conclude that the number of backward edges between $Q_1$ and $Q'_2$ is at least $(K_2/200)(|Q_1|+|Q'_2|)$. Replacing $Q_2$ by $Q'_2$, we may now assume that $P_1<Q_1<P_2<Q_2$ are four intervals as above, again with the number of backwards edges between these intervals assumed to be maximal in the same sense as before. 

Now that we have separation between $Q_1$ and $P_2$ and between $P_2$ and $Q_2$, all that is left to do is to enforce some separation between $P_1$ and $Q_1$. We are led to handle two different cases, depending on whether or not $Q_1$ and $Q_2$ have comparable sizes. 

Suppose first that $|Q_1| \leq |Q_2| \le 4|Q_1|$. Then, we may proceed as we did before, deleting the first $|Q_1|/20$ vertices from $Q_1$ to create separation between $P_1$ and $Q_1$ while still ensuring that a positive fraction of the backward edges between $Q_1$ and $Q_2$ still survive; the rest of the argument is identical to the proof of Theorem~\ref{tourthm}. 

Next, suppose that $|Q_1| < |Q_2|/4$. Then, let $Q'_2$ be the initial segment of $Q_2$ of the same length as $Q_1$, and let $Q''_2=Q_2\setminus Q'_2$. If there are fewer than $(K_2/10)(|Q'_2|+|Q_1|)$ backward edges between $Q_1$ and $Q'_2$, then the intervals $Q_1$ and $Q''_2$ must have at least $K_2(|Q_1|+|Q_2|)-(K_2/10)(|Q'_2|+|Q_1|)>K_2(|Q''_2|+|Q_1|)$ backward edges between them, which is a contradiction. This implies that, replacing $Q_2$ by $Q'_2$ if necessary, we may assume $Q_1$ and $Q_2$ have comparable sizes. Now, we take four such intervals $P_1<Q_1<P_2<Q_2$ as above (with the appropriate separation between intervals), with the additional constraint that $Q_1$ and $Q_2$ have comparable sizes, and again  with the number of backwards edges between intervals assumed to be appropriately maximal; the rest of the argument is identical to the proof of Theorem~\ref{tourthm}. 
\end{proof}

\section{Conclusion}\label{conc}
Our main contribution in this paper was to pin down the order of magnitude of the Ramsey numbers $\CC(t,\delta)$ and $\TT(t, \delta)$ for fixed $t \in \N$ as $\delta \to 0$. If one is however willing to settle for just the correct exponents governing the growth rates of these Ramsey numbers, then more can be said. 

A careful rendering of our argument yields dependencies governed by iterated logarithms, allowing us to establish weaker forms of the bounds~\eqref{colbound} and~\eqref{tourbound} that are valid as long as $t \lll 1/\delta$. Concretely, we have
\[	\CC(t,\delta) = (1/\delta)^{t(1 + o(1))}\]
as $\delta \to 0$ with $3 \le t \le \log^{(3)} (1/\delta)$, and 
\[	\TT(t,\delta) = (1/\delta)^{\lceil t/2 \rceil (1+o(1))}\]
as $\delta \to 0$ with $3 \le t \le \log^{(4)} (1/\delta)$. It would be of interest to work out, even roughly, at what point the above bounds cease to be valid.

It would also be interesting to understand the growth rate of the Ramsey numbers $\CC(t,\delta)$ and $\TT(t, \delta)$ in the other off-diagonal regime where $t \ggg 1/\delta$, as well as in the diagonal regime where $t \approx 1/\delta$. Both these questions pose interesting challenges of their own, somewhat orthogonal to the problems under consideration here.

Finally, it is somewhat embarrassing that we are unable to pin down the rate of growth of $\TT(2,\delta)$. While we have managed to estimate this degenerate case here up to a logarithmic multiplicative factor, we suspect that our lower bound gives the correct rate of growth. It would be of interest to improve on our upper bound and demonstrate that $ \TT(2,\delta) \ll \log(1/\delta) / \delta$, thereby closing a small but annoying gap in the existing bounds.

\section*{Acknowledgements}
The first author wishes to acknowledge support by the EPSRC, grant no. EP/N019504/1. The second author wishes to acknowledge support from NSF grant DMS-1800521. We would like to thank Kamil Popielarz for several helpful remarks. 
\bibliographystyle{amsplain}
\bibliography{turan_patterns}

\end{document}